\numberwithin{equation}{section}
\renewcommand{\Tilde}{\widetilde}
\newcommand{\R}{\mathbb{R}}
\newcommand{\N}{\mathbb{N}}
\newcommand{\B}{\mathscr{B}}
\newcommand{\HH}{\mathscr{H}}
\newcommand{\id}{\mathds{1}}
\newcommand{\eps}{\varepsilon}
\newcommand{\cO}{\mathcal{O}}
\newcommand{\cD}{\mathcal{D}}
\newcommand{\cH}{\mathcal{H}}
\newcommand{\cN}{\mathcal{N}}
\newcommand{\U}{\mathcal{U}}
\newcommand{\V}{\mathcal{V}}
\newtheorem{theorem}{Theorem}[section]
\newtheorem{prop}[theorem]{Proposition}
\newtheorem{cor}[theorem]{Corollary}
\newtheorem{lemma}[theorem]{Lemma}
\newtheorem{assumption}[theorem]{Assumption}
\theoremstyle{definition}
\theoremstyle{remark}
\newtheorem{rem}[theorem]{\bf Remark}
\begin{document}

\title[]{Robin eigenvalues on domains with peaks}

\author{Hynek Kova\v r\'\i k}

\address{DICATAM, Sezione di Matematica, Universit\`a degli studi di Brescia,
Via Branze 38, 25123 Brescia, Italy}
\email{hynek.kovarik@unibs.it}
\urladdr{http://hynek-kovarik.unibs.it}

\author{Konstantin Pankrashkin}

\address{Laboratoire de Math\'ematiques d'Orsay, Univ. Paris-Sud, CNRS, Universit\'e Paris-Saclay, 91405 Orsay, France}
\email{konstantin.pankrashkin@math.u-psud.fr}
\urladdr{http://www.math.u-psud.fr/~pankrashkin/}

\begin{abstract}
Let $\Omega\subset\mathbb{R}^N$, $N\ge 2,$ be a bounded domain with an outward power-like peak which is assumed not too sharp in a suitable sense.
We consider the Laplacian $u\mapsto -\Delta u$ in $\Omega$ with the Robin boundary condition
$\partial_n u=\alpha u$ on $\partial\Omega$ with $\partial_n$ being the outward normal derivative and $\alpha>0$ being a parameter.
We show that for large $\alpha$ the associated eigenvalues $E_j(\alpha)$
behave as $E_j(\alpha)\sim -\epsilon_j \alpha^\nu$, where
$\nu>2$ and $\epsilon_j>0$ depend on the dimension and the peak geometry.
This is in contrast with the well-known estimate
$E_j(\alpha)=O(\alpha^2)$ for the Lipschitz domains.
 \end{abstract}

\subjclass[2010]{35P15, 35P20, 49R05, 58C40}
\keywords{Robin Laplacian, negative eigenvalues, asymptotic expansion, domains with peaks}

\maketitle


\section{\bf Introduction} 

Given a domain $\Omega\subset\R^N,  N\geq 2,$ with a suitably regular boundary and a parameter $\alpha>0$, we 
consider the self-adjoint operator $Q^\Omega_\alpha$ in $L^2(\Omega)$ generated by the quadratic form
\[
q^\Omega_\alpha(u,u)=\int_\Omega |\nabla u|^2\, dx -\alpha \int_{\partial\Omega} u^2 d\sigma, \quad u\in H^1(\Omega),
\]
where $d\sigma$ stands for the $(N-1)$-dimensional Hausdorff measure. Informally, the operator $Q^\Omega_\alpha$
can be viewed as the Laplacian with the Robin boundary condition $\partial_n u=\alpha u$, where $D_n$ is the outward normal derivative.
Various properties of the operator $Q^\Omega_\alpha$
have been analysed in the literature over the last decades, see e.g. the recent paper \cite{DFK} for a review of results and a collection of open problems.
In the present paper we are interested in the asymptotic behaviour of the lowest eigenvalues $E_j(Q^\Omega_\alpha)$ of $Q^\Omega_\alpha$
in the limit $\alpha\to +\infty$. It is a standard result that for Lipschitz domains for large $\alpha$ one has the bound
$E_1(Q^\Omega_\alpha)\ge -K \alpha^2$ with $K>0$, see Subsection~\ref{ss-lip} below. Under additional regularity assumptions, i.e.
for the case when $\Omega$ is a so-called corner domain, the lower bound can be upgraded to the asymptotics $E_1(Q^\Omega_\alpha)\sim-K\alpha^2$ with some $K\ge 1$
depending on the regularity of the boundary as described in \cite{bp,lp}, in particular, $K=1$ for $C^1$ domains, see \cite{luzhu},
and for planar domains the value $K$ is determined by the smallest corner at the boundary, see \cite{kh,lp}.
More precise asymptotic expansions of $E_1(Q^\Omega_\alpha)$ have been obtained, under various geometric assumptions,
in \cite{em,p13,HK,kkr,pp15,pp15b}.  Analogous version of the problem for the $p$-Laplacian was treated in \cite{kop} by the authors.

One arrives then at the following natural question: what kind of results can be obtained for non-Lipschitz domains?
An easy revision of the above mentioned works shows that inward peaks do not influence the eigenvalue asymptotics
(the first terms in the asymptotic expansions are determined by the rest of the boundary),
so in the present work we are studying the operator $Q^\Omega_\alpha$ for domains $\Omega$
with a suitably defined outward peak. It is well-known that if the peak is too sharp, then the quadratic form  $q^\Omega_\alpha$
fails to be semibounded, hence, the first eigenvalue of $Q^\Omega_\alpha$ does not exist, see Remark~\ref{rem-1} below, 
so our objective is to describe the asymptotic behavior of the eigenvalues in a more detailed way for the peak of a ``moderate'' sharpness.
More precisely, we restrict our attention to power-like peaks characterized by two parameters
\[
p\in(1,2), \quad m>0
\]
as follows:
\begin{assumption}\label{def-1peak}
There exist $\ell_0>0$ and a positive $C^1$ function $\mu$ on $(-\ell_0,\ell_0)^{N-1}$ with $\mu(0)=m$ such that
\begin{itemize}
\item for some $\ell\in (0,\ell_0)$ one has
\[
\Omega\cap (-\ell,\ell)^N=\Big\{ (x_1,x') \in \R\times \R^{N-1}: \, x_1\in (0,\ell),\  \dfrac{|x'|}{\mu(x')}< x_1^p\Big\},
\]
\item for some $h\in (0,\ell)$ the domain $\Omega\setminus [-h,h]^N$ is Lipschitz.
\end{itemize}
\end{assumption}
In order to state the main results we need some notation and an auxiliary one-dimensional operator.
It will be convenient to use the shorthand
\[
n:=N-1.
\]
If $H$ is a self-adjoint operator, we denote by $E_j(H)$ its $j$th eigenvalue (when enumerated in the non-decreasing order and counted according to the multiplicities) if it exists.
If $E\in\R$, we denote by  $\cN(H,E)$ the number of eigenvalues of $H$ in $(-\infty,E)$. 

For $\lambda>0$, consider the symmetric differential operator in $L^2(0,\infty)$ given by
\[
C_0^\infty(0,\infty) \ni f \mapsto -f'' + \bigg(\frac{(np-1)^2-1}{4s^2}-\frac{n}{\lambda s^p}\bigg) f
\]
and denote by $A_{\lambda}$ its Friedrichs extension in $L^2(0,\infty)$, then it is standard to see that
the essential spectrum of $A_{\lambda}$ is $[0,+\infty)$ and that $A_{\lambda}$ has infinitely many negative eigenvalues $E_j(A_\lambda)$ accumulating at zero
(see Subsection~\ref{compop} for a more detailed discussion).
Our result on the asymptotics of individual eigenvalues of $Q^\Omega_\alpha$ is as follows:

\begin{theorem} \label{thm-main1}
For any fixed $j\in\N$ one has
\[
E_j(Q^\Omega_\alpha) =\, \Big(\dfrac \alpha m \Big)^{\frac{2}{2-p}}\, E_j(A_1) + o(\alpha^{\frac{2}{2-p}})
\]
as $\alpha$ tends to $+\infty$.
\end{theorem}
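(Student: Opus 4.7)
The strategy is bracketing plus an anisotropic rescaling that reduces the problem on the peak to a singularly perturbed quasi-one-dimensional eigenvalue problem whose formal limit is $A_1$. Set $\beta=(\alpha/m)^{1/(2-p)}$, so the target reads $E_j(Q^\Omega_\alpha)=\beta^2 E_j(A_1)+o(\beta^2)$. For the lower bound I would impose a Neumann condition on the interface $\{x_1=h\}\cap\Omega$ to split the operator; since $\Omega\setminus\Omega_{\mathrm{peak}}$ is Lipschitz, its Robin eigenvalues are $O(\alpha^2)=o(\beta^2)$ (because $p>1$ gives $2/(2-p)>2$) and the lowest $j$ eigenvalues of the split operator lie in $\Omega_{\mathrm{peak}}$. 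For the upper bound, I would plug in directly test functions supported in $\Omega_{\mathrm{peak}}$.

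Now perform the anisotropic substitution $y_1=\beta x_1$, $y'=\beta^p x'/m$. The peak cross-section $\{|x'|<\mu(x')x_1^p\}$ becomes a $C^1$-small perturbation of the canonical peak $P=\{y_1>0,\,|y'|<y_1^p\}$, and the interface moves to $y_1=\beta h\to\infty$. A direct computation (accounting for the Jacobian of the substitution and for the fact that $\mu(x')-m=O(x_1)$) rewrites the Rayleigh quotient of the peak operator as $\beta^2$ times
\[
\tilde R_\beta(v)=\frac{\int_P\bigl(|\partial_{y_1}v|^2+m^{-2}\beta^{2(p-1)}|\nabla_{y'}v|^2\bigr)\,dy\;-\;\int_{\partial P}v^2\,y_1^{p(n-1)}\,dy_1\,d\omega}{\int_P v^2\,dy}+o(1),
\]
with $d\omega$ the measure on the unit sphere of $\R^n$. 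Since $p>1$, the prefactor $\beta^{2(p-1)}$ diverges and penalises transverse variation of $v$; on $y'$-independent $v$ the unitary substitution $w=y_1^{np/2}v$ identifies $\tilde R_\beta$ exactly with the Rayleigh quotient of $A_1$.

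The upper bound follows from choosing as test functions the first $j$ Friedrichs eigenfunctions $\psi_k$ of $A_1$, setting $v_k(y)=y_1^{-np/2}\psi_k(y_1)\chi(y_1/(\beta h))$ with a smooth cut-off $\chi$, and pulling back to $x$-coordinates. The exponential decay of $\psi_k$ at infinity absorbs the cut-off error, while $\mu(x')-m=O(x_1)$ and the boundary Jacobian $1+O(x_1^{2(p-1)})$ yield only $o(\beta^2)$ corrections because $x_1\sim\beta^{-1}$ on the effective support. For the lower bound I would orthogonally split $v(y_1,y')=\bar v(y_1)+v_\perp(y_1,y')$ into the cross-sectional mean and its $L^2$-complement; a Poincar\'e inequality on each (nearly spherical) cross-section, together with a boundary trace inequality that also controls the cross-term $\int_{\partial P}\bar v\,v_\perp$, combines with the $\beta^{2(p-1)}$ penalty to render the $v_\perp$-contribution non-negative up to $o(\beta^2)$, while the $\bar v$-contribution reproduces the $A_1$ quotient up to the same-order correction. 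The min-max principle then closes both bounds.

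The principal obstacle is the lower bound near the tip $y_1\to 0$: the cross-section degenerates and the $1/s^2$ singularity of $A_1$ becomes dominant, so uniform Poincar\'e/trace constants are unavailable, and a Hardy-type inequality will be required to absorb the Robin surface contribution against the kinetic term near $y_1=0$ and to ensure that the approximating minimisers reproduce the Friedrichs boundary behaviour at $0$ that characterises $A_1$. A secondary task is to control the $C^1$ deformation $\mu(x')-m$ uniformly in $\beta$ in all error estimates.
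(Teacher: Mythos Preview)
Your overall architecture---bracketing, rescaling to a model peak, and reducing to $A_1$ via a cross-sectional projection---matches the paper's, but the execution differs in two places and there is one genuine gap in the lower bound.

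\emph{Differences.} The paper rescales \emph{isotropically} by $\alpha$, obtaining a thin peak $V_{\eps,a}$ with $\eps=m\alpha^{1-p}\to 0$, then straightens it to the cylinder $(0,a)\times\B_\eps$ and projects onto the \emph{Robin} ground state $\psi_{\eps,\rho(s,\eps)}$ of each cross-section rather than onto the mean. Projecting onto the Robin ground state diagonalises the cross-sectional form exactly, so the boundary cross-term you mention simply does not appear; the price is that $\psi$ depends on $s$, and the paper needs a separate lemma bounding $\int_{\B_\eps}|\partial_r\psi_{\eps,r}|^2\,dt$ by $O(\eps^2)$ to control the $\partial_s$ cross-terms. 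Your anisotropic rescaling and mean projection are legitimate alternatives, but they do not by themselves avoid the main difficulty below.

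\emph{The gap.} Your claim that the $\beta^{2(p-1)}$ penalty combined with Poincar\'e on each cross-section makes the $v_\perp$-contribution non-negative up to $o(1)$ is not true on the whole rescaled peak $\{0<y_1<\beta h\}$. For mean-zero $v_\perp$ the sharp trace estimate on $\B_{y_1^p}$ gives $\int_{\partial\B_{y_1^p}}v_\perp^2\le C\,y_1^{p}\int_{\B_{y_1^p}}|\nabla_{y'}v_\perp|^2$, so the boundary term eats the transverse kinetic energy once $y_1^{p}\gtrsim\beta^{2(p-1)}$, i.e.\ for $y_1\gtrsim\beta^{2(p-1)/p}$. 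Since $2(p-1)/p<1$ for $p\in(1,2)$, this bad region always occupies most of $(0,\beta h)$. The same problem affects your ``$+o(1)$'' in $\tilde R_\beta$: the boundary Jacobian $\sqrt{1+p^2x_1^{2p-2}}$ is bounded but \emph{not} $1+o(1)$ uniformly (at $y_1=\beta h$ one has $x_1=h$), so the formula is only valid after you already know the minimisers concentrate at $y_1=O(1)$, which is precisely what the lower bound has to establish. The paper closes this gap by an additional IMS localisation of the rescaled peak at a \emph{fixed} height $a$: on $\{y_1<a\}$ the cross-section radius is bounded and the penalty argument goes through uniformly, while on $\{y_1>a/2\}$ a direct cross-sectional Robin estimate gives a lower bound of order $-B/\eps$, which is $o(\eps^{2/(p-2)})$ and therefore harmless.

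Finally, you have the ``principal obstacle'' backwards. Near the tip $y_1\to 0$ the cross-section is \emph{small}, so Poincar\'e and trace constants \emph{improve}; the $1/s^2$ singularity is handled by the one-dimensional Hardy inequality, exactly as in the definition of the Friedrichs extension $A_1$, and causes no extra trouble. The hard part is the far end, as above.
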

In addition, we provide an estimate for the number of eigenvalues below a moving threshold:
\begin{theorem} \label{thm-main2}
For some $B>0$ there holds
\[
\cN(Q^\Omega_\alpha,-B\alpha^{p+1})= M_p\, \alpha^{\frac{p-1}{2}} + o(\alpha^{\frac{p-1}{2}})
\]
as $\alpha$ tends to $+\infty$, where
\[
M_p=\dfrac{1}{2\pi} \, B^{\frac{p-2}{2p}} \Big(\dfrac{n}{m}\Big)^{\frac 1 p}\int_0^1\sqrt{\dfrac{1-s^p}{s^p}}\, ds.
\]
\end{theorem}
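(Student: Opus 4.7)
The plan is to reduce $\cN(Q^\Omega_\alpha,-B\alpha^{p+1})$ to a semiclassical eigenvalue count for the one-dimensional operator $A_{m/\alpha}$ through bracketing and a transverse spectral decomposition inside the peak.

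I would first apply Dirichlet--Neumann bracketing across a smooth cut inside $\Omega\setminus[-h,h]^N$ to isolate the peak $P := \Omega \cap \{x_1 < \ell\}$ from the Lipschitz remainder. For the latter the standard lower bound $E_1 \geq -K\alpha^2$ applies, and since $p+1>2$ this component carries no eigenvalue below $-B\alpha^{p+1}$ for large $\alpha$; thus the count is asymptotically governed by $P$, with the two sides of the bracketing differing by a bounded term.

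Inside $P$, I would perform a transverse spectral decomposition. For each $t\in(0,\ell)$ the cross-section $\omega_t := P\cap\{x_1=t\}$ is, up to a $C^1$-small perturbation of the radial profile $\mu$, the Euclidean ball $B^n_{R(t)}$ of radius $R(t)=mt^p$. A direct modified-Bessel (equivalently Hellmann--Feynman) calculation gives the first transverse Robin eigenvalue
\[
\nu_1(\alpha,t) = -\frac{\alpha n}{R(t)}\bigl(1+O(\alpha R(t))\bigr),
\]
while the higher eigenvalues satisfy $\nu_k(\alpha,t) \geq c/R(t)^2$ for some $c>0$. In the classically allowed range $t \leq C\alpha^{-1}$ one has $R(t)^{-2} \geq c'\alpha^{2p}$, which dominates $\alpha^{p+1}$ since $2p>p+1$, and $\alpha R(t) \leq C\alpha^{1-p}\to 0$; hence the higher modes cannot produce eigenvalues below the threshold, and only the ground mode contributes. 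Projecting onto this mode and using the unitary substitution $g(t)=t^{np/2}f(t)$, which transfers the volume weight $t^{np}$ into the centrifugal $\tfrac{(np-1)^2-1}{4t^2}$, sandwiches (via further Dirichlet/Neumann bracketing on a fine partition of $(0,\ell)$) the peak problem between two copies of $A_{m/\alpha}$ on $(0,\ell)$ whose potentials differ from the true one by lower-order terms.

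Finally, I would carry out the semiclassical analysis of $A_{m/\alpha}$. A dilation shows $E_j(A_\lambda)=\lambda^{-2/(2-p)}E_j(A_1)$, so the count reduces to $\cN(A_1,-\theta_\alpha)$ with $\theta_\alpha := Bm^{2/(2-p)}\alpha^{p(1-p)/(2-p)}\to 0^+$. Since the negative spectrum of $A_1$ accumulates at $0$, a Bohr--Sommerfeld-type estimate governs the leading order by the classical phase-space integral $\int_{V_1(s)<-\theta_\alpha}\sqrt{-\theta_\alpha-V_1(s)}\,ds$, with $V_1(s)=\tfrac{(np-1)^2-1}{4s^2}-n/s^p$; the centrifugal piece is negligible in this limit, and the change of variable $s=(n/\theta_\alpha)^{1/p}u$ reduces the dominant contribution to a multiple of $n^{1/p}\theta_\alpha^{(p-2)/(2p)}\int_0^1\sqrt{(1-u^p)/u^p}\,du$. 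Collecting the powers of $\alpha, B, m, n$ then produces the claimed asymptotic $M_p\alpha^{(p-1)/2}$. The principal obstacle will be the uniform control of the error terms arising from the geometric defect $\mu\neq m$, the transverse mode truncation, and the successive Dirichlet/Neumann bracketings, ensuring each contributes only $o(\alpha^{(p-1)/2})$ so that the upper and lower bounds close at leading order.
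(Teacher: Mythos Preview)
Your proposal is correct and follows essentially the same strategy as the paper: isolate the peak by Dirichlet--Neumann bracketing against the Lipschitz remainder, reduce to a one-dimensional operator by projecting onto the ground Robin mode of the transverse ball (with the weight substitution $g=t^{np/2}f$ producing the centrifugal term), rescale to $A_1$, and read off the counting function via a Weyl/Bohr--Sommerfeld asymptotic for $\cN(A_1,-\theta)$ as $\theta\to 0^+$. The only notable technical variation is that the paper, rather than using a fine Dirichlet/Neumann partition of $(0,\ell)$, straightens the peak to a cylinder by the change of variables $(s,t)\mapsto(s,ts^p)$ and then controls the cross terms arising from the $s$-dependence of the ground eigenfunction through an explicit bound on $\|\partial_r\psi_{\eps,r}\|_{L^2}$; either route leads to the same comparison with $A_{m/\alpha}$.
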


\begin{rem} \label{rem-1}
Let us explain the restriction $p\in (1,2)$ in Assumption \ref{def-1peak}. If $p\leq 1$, then $\Omega$ is Lipschitz. Hence 
the condition $p>1$ is necessary for $\Omega$ to have a peak. On the other hand, for $p>2$ one has $E_1(Q^\Omega_\alpha) = -\infty$
for all $\alpha>0$, and for $p=2$ one has $E_1(Q^\Omega_\alpha) > -\infty$ only if $\alpha \leq \alpha_0$ with some $\alpha_0>0$, see 
\cite{dan,mp,nt}.  As explained in \cite{dan}, this is equivalent to the fact that for $p>2$ the trace operator $H^1(\Omega) 
\mapsto L^2(\partial\Omega)$ does not exist, and for $p=2$ it exists, but is not compact. 
\end{rem}

\begin{rem}
An upper bound for $E_1(Q^\Omega_\alpha)$ on the planar domain $\Omega=\big\{(x_1,x_2):|x_2|<x_1^p\big\}$
was obtained in \cite[Example~3.4]{lp} using a test function argument:
it was shown that $E_1 (Q^\Omega_\alpha)\, \leq \, -C\, \alpha^{\frac{2}{2-p}}$ for large $\alpha>0$
and some $-C>0$.  
\end{rem}

\begin{rem}
Due to the assumption $p\in(1,2)$ one has $\frac{2}{2-p}>2$ which indeed shows that the eigenvalues $E_j(Q^\Omega_\alpha)$
tend to $-\infty$ much faster then for the Lipschitz case. Furthermore, the gaps $G_j:=E_{j+1}(Q^\Omega_\alpha)-E_j(Q^\Omega_\alpha)$
has the same order in $\alpha$, which is in contrast to the previously studied cases with more regularity: as shown in \cite{kh},
for curvilinear polygons one has $G_j=O(\alpha^2)$, and for $C^k$ smooth domains one has $G_j=o(\alpha)$ if $k=2$ and $G_j=O(\sqrt{\alpha})$ if $k=3$,
see \cite{pp15b}.
\end{rem}

\begin{rem}
Concerning Theorem~\ref{thm-main2} we remark that if $\Omega$ is Lipschitz, $B>0$ and $p>1$, then $\cN(Q^\Omega_\alpha,-B\alpha^{p+1})=0$
for $\alpha$ large enough, see Corollary \ref{cor-gris}. Therefore, the growing number of eigenvalues below $-B\alpha^{p+1}$ is purely due to the presence of the peak.
\end{rem}

\section{\bf Scheme of proof}  \label{sec-prelim}

In order to prove the main results we perform first a number of truncations and dilations in order
to isolate the peak and to reudce to the problem to the study of some models domains.

\subsection{Lipschitz domains}\label{ss-lip}
Let us recall some known facts about Robin Laplacians on Lipschitz domains. The following result is quite standard,
see e.g. in \cite[Theorem~1.5.1.10]{Gris}:

\begin{prop}\label{gris0}
Let $U$ be a bounded Lipschitz domain. Then there exists a constant $K>0$ such that
\[
\eta \int_U |\nabla u|^2 dx \, +\,   \eta^{-1} \int_U u^2 dx  \  \ge  \ K  \int_{\partial U} u^2\, d\sigma
\]
holds for all $u\in H^1(U)$ and all $\eta \in (0,1)$. 
\end{prop}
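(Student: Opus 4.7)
The plan is to establish the inequality via the Rellich-type identity obtained from the divergence theorem applied to the vector field $u^2 F$, where $F$ is a suitable Lipschitz vector field designed to detect the boundary.

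First I would construct a Lipschitz vector field $F:\R^N\to\R^N$ with bounded derivatives such that $F\cdot\nu\ge c>0$ on $\partial U$, where $\nu$ is the outward unit normal. The existence of such a field is the geometric input from the Lipschitz hypothesis: one covers $\partial U$ by finitely many charts in which the boundary is the graph of a Lipschitz function over some coordinate hyperplane, uses the corresponding coordinate direction as a local candidate for $F$, and glues these via a partition of unity subordinate to the cover. The uniform transversality of the local coordinate directions to the graph (which follows from the uniform Lipschitz constant) yields the lower bound $F\cdot\nu\ge c$.

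Next I would apply the divergence theorem, valid on Lipschitz domains for vector fields in $W^{1,\infty}$, to obtain for $u\in C^\infty(\Bar U)$ the identity
\[
\int_{\partial U} u^2\,(F\cdot\nu)\, d\sigma \,=\, \int_U \operatorname{div}(u^2 F)\, dx \,=\, \int_U u^2 \operatorname{div} F\, dx \,+\, 2\int_U u\, F\cdot\nabla u\, dx.
\]
Using the lower bound $F\cdot\nu\ge c$ on the left and the pointwise estimates $|\operatorname{div} F|\le \|F\|_{W^{1,\infty}}$, $|F|\le\|F\|_\infty$ on the right, together with Young's inequality $2|u||F\cdot\nabla u|\le \|F\|_\infty(\eta|\nabla u|^2 + \eta^{-1} u^2)$, one gets
\[
c\int_{\partial U} u^2\, d\sigma \,\le\, \bigl(\|F\|_{W^{1,\infty}} + \|F\|_\infty\,\eta^{-1}\bigr) \int_U u^2\, dx \,+\, \|F\|_\infty\, \eta \int_U |\nabla u|^2\, dx.
\]
Since $\eta\in(0,1)$ gives $\eta^{-1}>1$, the coefficient of $\int_U u^2$ is absorbed into a multiple of $\eta^{-1}$, and choosing $K$ to be $c$ divided by $\max\{\|F\|_{W^{1,\infty}}+\|F\|_\infty,\|F\|_\infty\}$ yields the claim for smooth $u$. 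The extension to general $u\in H^1(U)$ follows by density of $C^\infty(\Bar U)$ in $H^1(U)$ for Lipschitz domains and continuity of the trace operator.

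The main technical obstacle is the construction of the transversal vector field: one must verify that the gluing procedure delivers a uniform lower bound on $F\cdot\nu$ despite the normal $\nu$ being only $L^\infty$ on $\partial U$. Once this geometric lemma is in place, the rest is a standard computation with Young's inequality.
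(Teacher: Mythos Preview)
Your argument is correct: the Rellich identity obtained by applying the divergence theorem to $u^2 F$ with a Lipschitz vector field transversal to $\partial U$, followed by Young's inequality, is precisely the standard proof of this trace estimate. The paper does not supply a proof but simply cites \cite[Theorem~1.5.1.10]{Gris}, and Grisvard's proof there is exactly the one you outline, so your approach coincides with the referenced one.
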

An immediate consequence of Proposition \ref{gris0} is the following
\begin{cor} \label{cor-gris}
Let $U$ be a bounded Lipschitz domain. Then there exist constants $K>0$ and $\alpha_0>0$ such that 
\begin{equation} \label{lowerb-gris}
E_1(Q^U_\alpha) \, \geq \, -K\, \alpha^2 \qquad \forall\, \alpha\geq \alpha_0. 
\end{equation}
\end{cor}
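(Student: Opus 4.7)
The plan is to deduce the corollary directly from Proposition \ref{gris0} by using it to absorb the boundary term $\alpha\int_{\partial U}u^2\,d\sigma$ appearing in the quadratic form $q^U_\alpha$ into a combination of the gradient norm and the $L^2$ norm of $u$, then choosing the free parameter $\eta$ as a function of $\alpha$ to balance the resulting coefficients.

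More concretely, I would start from Proposition \ref{gris0}, which supplies a constant $K>0$ (depending only on $U$) such that
\[
\int_{\partial U}u^2\,d\sigma \leq \frac{1}{K}\Big(\eta \int_U |\nabla u|^2\,dx + \eta^{-1}\int_U u^2\,dx\Big)
\]
for every $u\in H^1(U)$ and $\eta\in(0,1)$. Substituting this into the definition of $q^U_\alpha(u,u)$ yields
\[
q^U_\alpha(u,u) \geq \Big(1-\frac{\alpha\eta}{K}\Big)\int_U|\nabla u|^2\,dx - \frac{\alpha}{K\eta}\int_U u^2\,dx.
\]
The natural choice is then $\eta=K/(2\alpha)$, which is admissible (i.e.\ lies in $(0,1)$) as soon as $\alpha\geq \alpha_0:=K/2$. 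With this choice the gradient coefficient becomes $1/2\geq 0$, which makes the first term nonnegative, while the coefficient in front of $\|u\|_{L^2(U)}^2$ becomes $-2\alpha^2/K^2$. This gives $q^U_\alpha(u,u)\geq -(2/K^2)\alpha^2 \|u\|_{L^2(U)}^2$ for all $u\in H^1(U)$ and $\alpha\geq \alpha_0$, and the bound \eqref{lowerb-gris} follows from the variational characterization of $E_1(Q^U_\alpha)$ (with the constant $K$ in the statement of the corollary taken to be $2/K^2$, after renaming).

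There is no real obstacle here: the entire content is encoded in Proposition \ref{gris0}, and the only thing to verify is that the optimization in $\eta$ is admissible for $\alpha$ large enough. The only small thing to keep in mind is to rename the constant to avoid a conflict with the $K$ appearing in Proposition \ref{gris0}; everything else is a one-line computation.
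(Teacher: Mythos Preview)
Your proof is correct and is precisely the standard derivation the paper has in mind when it calls the corollary an immediate consequence of Proposition~\ref{gris0}. The only cosmetic slip is that $\eta=K/(2\alpha)$ lies in the \emph{open} interval $(0,1)$ only for $\alpha>K/2$, so take $\alpha_0$ strictly larger than $K/2$ (e.g.\ $\alpha_0=K$).
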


\subsection{Isolating the peak}

Recall that $\Omega$ satisfies Assumption~\ref{def-1peak}, which implies a choice of strictly positive constants $\ell$ and $h$ appearing in the formulation.
For $\delta\in(0,h)$ denote
\begin{align*}
\Lambda_\delta&:= \Big\{ (x_1,x'): x_1\in (0,\delta), \  \dfrac{|x'|}{\mu(x')} < x_1^p\Big\},\\
\Omega_\delta^*&:=\Omega\setminus \overline{\Lambda_\delta},\\
\partial_0\Lambda_\delta&:=\big\{(x_1,x_2) \in \partial \Lambda_\delta: x_1<\delta\big\},
\end{align*}
then $\overline{\Omega}=\overline{\Lambda_\delta\cup \Omega_\delta^*}$ and $\Omega_\delta^*$ is a bounded Lipschitz domain by construction.
A standard application of the min-max principle
shows that for any $j\in\N$ one has
\begin{equation}
    \label{eq-base1}
E_j(R^{N,\delta}_\alpha\oplus K^{N,\delta}_{\alpha})\le E_j(Q^\Omega_\alpha)\le E_j(R^{D,\delta}_\alpha)
\end{equation}
where $R^{N/D,\delta}_\alpha$ are the self-adjoint operators in $L^2(\Lambda_\delta)$
defined respectively by the quadratic forms
\begin{align*}
r^{N,\delta}_\alpha(u,u)&=\int_{\Lambda_\delta}|\nabla u|^2dx-\alpha \int_{\partial_0\Lambda_\delta} u^2d\sigma,
\quad \cD(r^{D,\delta}_\alpha)=H^1(\Lambda_\delta),\\
r^{D,\delta}_\alpha(u,u)&=r^{N,\delta}_\alpha(u,u), \quad \cD(r^{D,\delta}_\alpha)=\big\{ u\in H^1(\Lambda_\delta):u(\delta,\cdot)=0\big\},
\end{align*}
and  $K^{N,\delta}_\alpha$ is the self-adjoint operator in $L^2(\Omega^*_\delta)$
defined by the quadratic form
\begin{align*}
k^{N,\delta}_\alpha(u,u)&=\int_{\Omega^*_\delta}|\nabla u|^2 dx-\alpha \int_{\partial\Omega^*_\delta \cap \partial\Omega} u^2\, d\sigma,
\quad \cD(k^{N,\delta}_\alpha)=H^1(\Omega^*_\delta).
\end{align*}

\begin{prop}\label{prop1}
For any $\delta\in(0,h)$ there exist constants $\alpha_0>0$ and  $K>0$ such that for $\alpha>\alpha_0$ and
$j\in\big\{1,\dots,\cN(Q^\Omega_\alpha,-K\alpha^2)\big\}$
there holds $E_j(R^{N,\delta}_\alpha)\le E_j(Q^\Omega_\alpha)\le E_j(R^{D,\delta}_\alpha)$.
\end{prop}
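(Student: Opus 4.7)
The plan is to leverage what has already been established: the upper bound in the claim is free from \eqref{eq-base1}, so only the lower bound $E_j(R^{N,\delta}_\alpha)\le E_j(Q^\Omega_\alpha)$ needs to be proved, and this reduces to an eigenvalue-counting argument that separates the contributions of the two pieces $\Lambda_\delta$ and $\Omega^*_\delta$.

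First, since $\Omega^*_\delta$ is a bounded Lipschitz domain by Assumption~\ref{def-1peak}, Corollary~\ref{cor-gris} yields constants $K>0$ and $\alpha_0>0$ (depending on $\delta$) such that
\[
E_1(K^{N,\delta}_\alpha)\,\ge\,-K\alpha^2\quad\text{for all }\alpha\ge\alpha_0.
\]
In particular $K^{N,\delta}_\alpha$ has no spectrum in $(-\infty,-K\alpha^2)$. Next, recall from \eqref{eq-base1} that
\[
E_j\bigl(R^{N,\delta}_\alpha\oplus K^{N,\delta}_\alpha\bigr)\le E_j(Q^\Omega_\alpha)
\]
for every $j\in\N$.

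Now fix $j\in\{1,\dots,\cN(Q^\Omega_\alpha,-K\alpha^2)\}$, which by definition of $\cN$ means that $E_j(Q^\Omega_\alpha)<-K\alpha^2$. Combining with the previous display gives
\[
E_j\bigl(R^{N,\delta}_\alpha\oplus K^{N,\delta}_\alpha\bigr)<-K\alpha^2.
\]
Thus the first $j$ eigenvalues of the direct sum all lie strictly below $-K\alpha^2$. But the spectrum of the direct sum is just the union, counted with multiplicities, of the spectra of its two summands, and by the first step no eigenvalue of $K^{N,\delta}_\alpha$ lies below $-K\alpha^2$. Consequently these $j$ lowest eigenvalues of the direct sum all come from $R^{N,\delta}_\alpha$, so
\[
E_j(R^{N,\delta}_\alpha)=E_j\bigl(R^{N,\delta}_\alpha\oplus K^{N,\delta}_\alpha\bigr)\le E_j(Q^\Omega_\alpha),
\]
which is the desired lower bound.

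This is essentially a bookkeeping argument, so no real obstacle is expected; the only subtlety is ensuring that the constant $K$ provided by Corollary~\ref{cor-gris} for the Lipschitz piece $\Omega^*_\delta$ coincides with the threshold $-K\alpha^2$ used in the counting function $\cN(Q^\Omega_\alpha,-K\alpha^2)$, which is automatic once one simply picks $K$ to be that very constant.
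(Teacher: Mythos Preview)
Your argument is correct and is essentially the same as the paper's: both obtain $K^{N,\delta}_\alpha\ge -K\alpha^2$ from Corollary~\ref{cor-gris} (together with the min-max principle, since $K^{N,\delta}_\alpha$ has Robin condition only on a subset of $\partial\Omega^*_\delta$, hence dominates $Q^{\Omega^*_\delta}_\alpha$), and then conclude that for $E_j(Q^\Omega_\alpha)<-K\alpha^2$ the $j$th eigenvalue of the direct sum in \eqref{eq-base1} must come from $R^{N,\delta}_\alpha$. You simply spell out the direct-sum bookkeeping in slightly more detail than the paper does.
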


\begin{proof}
For large $\alpha$ for some $K>0$ one has $K^{N,\delta}_\alpha\ge -K\alpha^2$ due to Corollary \ref{cor-gris} and the min-max principle.
Hence for $E_j(Q^\Omega_\alpha)<-K\alpha^2$ one has
$E_j(R^{N,\delta}_\alpha\oplus S^{N,\delta}_{\alpha})=E_j(R^{N,\delta}_\alpha)$.
\end{proof}

\subsection{Reduction to a model peak}
In order to estimate the eigenvalues of $R^{D/N,\delta}_\alpha$ we compare them with Robin Laplacians on some model domains.
Namely, for $k>0$ and $a>0$ denote
\begin{align}
V_{k,a}&=\big\{(x_1,x')\in \R\times\R^n:\, x_1\in(0,a), \, |x' | <  k x_1^p\big\}\subset\R^N, \nonumber \\
\partial_0 V_{k,a}&=\big\{(x_1,x')\in \R\times\R^n:\, x_1\in(0,a), \, |x'|= k x_1^p \big\}\subset \partial V_{k,a},   \label{def-veps} \\
\Tilde H^1_0(V_{k,a})&=\big\{u\in H^1(V_{k,a}):\, u(a,\cdot)=0\big\}. \nonumber
\end{align}
Let $S^{k,a}_\alpha$ and $\Tilde S^{\, k,a}_\alpha$ be the self-adjoint operators in $L^2(V_{k,a})$ generated respectively
by the quadratic forms $s^{k,a}_\alpha$ and $\Tilde s\,^{k,a}_\alpha$ given by
\begin{align*}
s^{k,a}_\alpha (u,u)&=\int_{V_{k,a}} |\nabla u|^2dx-  \alpha \int_{\partial_0 V_{k,a}} u^2\, ds, \quad \cD(q_{k,a})=H^1(V_{k,a}),\\
\Tilde s^{\, k,a}_\alpha(u,u)&=s^{k,a}_\alpha(u,u), \quad \cD(\Tilde s^{\, k,a}_\alpha)=\Tilde H^1_0(V_{k,a}).
\end{align*}

\begin{prop}\label{prop2}
There exist $c>0$ and $\delta_0>0$ such that for all $\delta\in(0,\delta_0)$ and $\alpha>0$
there holds
\begin{align*}
E_j(R^{N,\delta}_\alpha)&\ge (1-c\delta) E_j(S^{m,\delta}_{\alpha_N}), \quad \alpha_N:=\dfrac{1+c\, \delta}{1-c\, \delta}\, \alpha,\\
E_j(R^{D,\delta}_\alpha)& \le (1+c\delta) E_j(\Tilde S^{m,\delta}_{\alpha_D}), \quad \alpha_D:=\dfrac{1-c\, \delta}{1+c\, \delta}\, \alpha.
\end{align*}
\end{prop}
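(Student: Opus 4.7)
The plan is to construct a bi-Lipschitz diffeomorphism $\Phi\colon V_{m,\delta}\to\Lambda_\delta$ whose Jacobian is close to the identity for small $\delta$, use it to transport the Robin quadratic form, and then invoke the min-max principle.

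For $\Phi$ I would take $\Phi(y_1,y'):=(y_1,f(y')y')$, where $f(y')$ is the $C^1$ scalar function, defined in a neighbourhood of $y'=0$ by the implicit function theorem, satisfying $m f(y')=\mu(f(y')y')$ with $f(0)=1$; this is possible because $\mu\in C^1$ and $\mu(0)=m$. A direct check shows $|\Phi'(y)|/\mu(\Phi'(y))=|y'|/m$, so $\Phi$ bijects $V_{m,\delta}$ onto $\Lambda_\delta$, maps $\partial_0 V_{m,\delta}$ onto $\partial_0\Lambda_\delta$, and fixes the cap $\{y_1=\delta\}$, the latter being essential for treating the Dirichlet variant. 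The Jacobian has block-diagonal structure with lower-right block $f(y')I_n+y'\otimes\nabla f(y')$; combining $|y'|\le m\delta^p$ on $V_{m,\delta}$ with $f=1+O(|y'|)$ and $\nabla f=O(1)$ yields $J_\Phi=I+O(\delta^p)=I+O(\delta)$ since $p>1$ and $\delta<1$, and hence $J_\Phi^{-1}$, $|\det J_\Phi|$ and the boundary area-element ratio are all of the form $1+O(\delta)$.

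Next I would push the Dirichlet energy, boundary integral and $L^2$-mass through $\Phi$, with $v:=u\circ\Phi$, to obtain a constant $c>0$, independent of small $\delta$ and of $\alpha>0$, such that each of $\int_{\Lambda_\delta}|\nabla u|^2\,dx$, $\int_{\partial_0\Lambda_\delta}u^2\,d\sigma$ and $\|u\|^2_{L^2(\Lambda_\delta)}$ lies between $(1-c\delta)$ and $(1+c\delta)$ times its counterpart on $V_{m,\delta}$. Regrouping, the Neumann form satisfies
\[
r^{N,\delta}_\alpha(u,u)\ge (1-c\delta)\int_{V_{m,\delta}}|\nabla v|^2\,dy-\alpha(1+c\delta)\int_{\partial_0 V_{m,\delta}}v^2\,d\sigma=(1-c\delta)\,s^{m,\delta}_{\alpha_N}(v,v),
\]
with $\alpha_N=\alpha(1+c\delta)/(1-c\delta)$, and symmetrically $r^{D,\delta}_\alpha(u,u)\le (1+c\delta)\,\Tilde s^{\,m,\delta}_{\alpha_D}(v,v)$; the Dirichlet condition $u(\delta,\cdot)=0$ translates to $v(\delta,\cdot)=0$ since $\Phi$ fixes the cap.

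Finally, since $u\mapsto v$ is a linear isomorphism between the respective form domains, the min-max principle converts these form and norm inequalities into the eigenvalue inequalities in the statement. A clean way is to rewrite the combined estimate for $\lambda\le 0$ as $r^{N,\delta}_\alpha(u,u)-\lambda\|u\|^2_{L^2(\Lambda_\delta)}\ge (1-c\delta)\bigl[s^{m,\delta}_{\alpha_N}(v,v)-\lambda\|v\|^2_{L^2(V_{m,\delta})}\bigr]$, which directly compares the negative parts of the spectra of $R^{N,\delta}_\alpha$ and $S^{m,\delta}_{\alpha_N}$; the Dirichlet case is symmetric. The main technical obstacle lies exactly here: the Rayleigh quotients can have either sign and the factors $(1\pm c\delta)$ arising in the numerator and denominator combine differently in the two sign regimes, so some sign-sensitive bookkeeping is needed, possibly enlarging $c$ in order to absorb ratios of the form $(1-c_0\delta)/(1+c_0\delta)$ into a single $(1\pm c\delta)$.
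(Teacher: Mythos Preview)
Your proposal is correct and follows essentially the same route as the paper: build a diffeomorphism between $\Lambda_\delta$ and $V_{m,\delta}$ whose Jacobian is $I+O(\delta)$, transport the three integrals, and feed the resulting two-sided bounds into the min-max principle. The only noteworthy difference is that the paper writes down the \emph{explicit} map in the opposite direction, $\Phi:\Lambda_\delta\to V_{m,\delta}$, $\Phi(x_1,x')=(x_1,\tfrac{m}{\mu(x')}x')$, and simply observes that $\nabla\Phi(x)=I+O(|x|)$; your implicit-function construction produces exactly the inverse of this map, so the extra IFT step is unnecessary. Your more careful treatment of the sign issue in the Rayleigh-quotient comparison is a welcome addition---the paper handles this in one sentence (``substitution\dots into the min-max principle gives the result'').
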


\begin{proof}
Consider the map $\Phi:\overline{\Lambda_\delta} \to \overline{V_{m,\delta}}$ given by
\[
\Phi(x_1,x')=\Big(x_1, \dfrac{m}{\mu(x')}x' \Big),
\]
then $\nabla\Phi (x)=\text{Id}+ \mathcal{O}(|x|)$  for $x\to 0$. Hence, for sufficiently small $\delta$ the map $\Phi$ is a diffeomorphism,
and its inverse $\Psi: \overline{V_{m,\delta}}\to \overline{\Lambda_\delta}$ satisfies $\nabla\Psi (x)=\text{Id}+ \mathcal{O}(|x|)$ as well.
Therefore, there exist $c_0>0$ and $\delta_0>0$ such that that for $\delta\in(0,\delta_0)$
and all $u\in H^1(\Lambda_\delta)$ one can estimate, with $v:=u\circ \Psi\in H^1(V_{m,\delta})$,
\begin{eqnarray*}
(1-c_0\delta)\, \|v\|^2_{L^2(V_{m,\delta})} \le & \|u\|^2_{L^2(\Lambda_\delta)}&\le (1+c_0\delta)\, \|v\|^2_{L^2(V_{m,\delta})},\\
(1-c_0\delta) \displaystyle\int_{V_{m,\delta}} |\nabla v|^2\,dx  \le & \displaystyle\int_{\Lambda_\delta} |\nabla u|^2\,dx & \le (1+c_0\delta) \displaystyle\int_{V_{m,\delta}} |\nabla v|^2\,dx,\\
(1-c_0\delta) \displaystyle\int_{\partial_0 V_{m,\delta}} v^2\,ds \le & \displaystyle\int_{\partial_0\Lambda_\delta} u^2\,ds & \le (1+c_0\delta) \displaystyle\int_{\partial_0 V_{m,\delta}} v^2\,ds.
\end{eqnarray*}
The substitution of these inequalities into the expressions for $s^{k,a}_\alpha$ and $\Tilde s^{\, k,a}_\alpha$ 
and into the min-max principle gives the result.
\end{proof}

\subsection{The rescaled peak} 
\noindent Now  we need to study the eigenvalues of $S^{m,\delta}_\alpha$ and $\Tilde S^{\, m,\delta}_\alpha$ for large $\alpha$.
It is easy to see that
\[
\alpha V_{m,\delta}=  V_{m\alpha^{1-p}, \delta\alpha},
\]
and the change of variables $x=\frac y\alpha$ in the above expressions leads to the equalities
\begin{equation}
 \label{eq-base2}
E_j\big (S^{m,\delta}_\alpha\big)=\alpha^2 E_j\big(S^{m\alpha^{1-p}, \delta\alpha}_1\big),
\quad
E_j\big(\Tilde S^{\, m,\delta}_\alpha\big)=\alpha^2 E_j\big(\Tilde S^{\, m\alpha^{1-p}, \delta\alpha}_1\big).
\end{equation}
Hence we denote
\begin{equation} \label{rescaling}
\eps:=m\alpha^{1-p}\, , \qquad \text{so that } \ \delta\alpha=b \eps^{-\frac{1}{p-1}}, \quad b:=m^{\frac{1}{p-1}}\, \delta
\end{equation}
and study the rescaled operators
\begin{equation} \label{Q-eps}
Q_{\eps,b}:=S^{\, \eps, \, b \, \eps^{\frac{1}{1-p}}}_1, \quad \Tilde Q_{\eps,b}:=\Tilde S^{\, \eps\, ,b\,  \eps^{\frac{1}{1-p}}}_1
\end{equation}
as $\eps\to 0$. In Section~\ref{sec-qeb} we prove the following crucial result: 

\begin{prop} \label{prop-main}
There exist $K_1>0$, $k_1>0$, $\eps_0>0$ such that
\begin{equation} \label{main-upperb}
E_j(\Tilde Q_{\eps,b})\le (1-k_1\eps)\, \eps^{\frac{2}{p-2}}E_j(A_1) \text{ for all } \eps\in(0,\eps_0), \ 1\leq j \leq \cN(A_1,-K_1\eps^{\frac{p}{2-p}}). 
\end{equation}
Furthermore, one can find $K_2>0$, $k_2>0$, $B>0$ with
\begin{equation} \label{main-lowerb}
E_j(Q_{\eps,b})\ge (1+k_2\eps)\, \eps^{\frac{2}{p-2}} E_j(A_1) - K_2
\text{ for all } \eps\in(0,\eps_0),\ 1\leq j \leq \cN(Q_{\eps,b},-B/\eps).
\end{equation}
\end{prop}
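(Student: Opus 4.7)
The core computation underlying both inequalities is the following. On the rescaled peak $V_{\eps,L}$ with $L:=b\eps^{-1/(p-1)}$, consider the separated ansatz $u(x_1,x')=f(x_1)$ constant in the cross-sectional variable. Since the cross-section at height $s$ is a Euclidean ball of radius $\eps s^p$, Fubini's theorem gives
\[
\frac{\int_{V_{\eps,L}}|\nabla u|^2\,dx - \int_{\partial_0 V_{\eps,L}} u^2\,d\sigma}{\int_{V_{\eps,L}} u^2\,dx}
=
\frac{\int_0^L s^{np} f'(s)^2\,ds - (n/\eps)\int_0^L s^{np-p} f(s)^2\,ds}{\int_0^L s^{np} f(s)^2\,ds} + O(\eps),
\]
where the $O(\eps)$ comes from the Jacobian factor $\sqrt{1+\eps^2 p^2 s^{2(p-1)}}$ on the lateral boundary. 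The Sturm--Liouville substitution $g(s)=s^{np/2}f(s)$ turns the leading ratio on the right into the quadratic form of $A_\eps$, thanks to the identity $(np-1)^2-1=np(np-2)$ which produces exactly the advertised centrifugal coefficient. The affine rescaling $s=\eps^{1/(2-p)}t$ then yields $E_j(A_\eps)=\eps^{2/(p-2)}E_j(A_1)$, which accounts for the prefactor in the Proposition. At the localization scale $s\sim\eps^{1/(2-p)}$ of the low-lying eigenfunctions of $A_\eps$, the Jacobian correction has relative size of order $\eps^{2/(2-p)}\ll\eps$, so the global $O(\eps)$ estimate is consistent.

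For the upper bound \eqref{main-upperb}, I would apply the min--max principle with trial functions constructed as follows. Starting from the first $j$ eigenfunctions of $A_1$, transplant each one to a function of $s$ via the inverse affine rescaling $s=\eps^{1/(2-p)}t$ and the inverse Sturm--Liouville substitution $f=s^{-np/2}g$, and multiply by a smooth cut-off vanishing at $s=L$ and near $s=0$; extending these constantly in $x'$ yields functions in $\Tilde H^1_0(V_{\eps,L})$. Since the eigenfunctions of $A_1$ are exponentially localized and $L=b\eps^{-1/(p-1)}$ exceeds by a negative power of $\eps$ the localization scale $\eps^{1/(2-p)}$, the cut-off contributes only an error exponentially small in $\eps$, and the computation above yields \eqref{main-upperb} directly.

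For the lower bound \eqref{main-lowerb}, I would decompose $u(s,x')=f(s)+w(s,x')$, where $f(s)$ is the mean of $u(s,\cdot)$ on the cross-section $B(0,\eps s^p)$ and $w$ has transversal mean zero on each cross-section. The quadratic form evaluated on the $f$-component reproduces, up to a multiplicative $(1+O(\eps))$ factor accounting for the Jacobian error and for the $s$-derivative cross-terms (the latter controlled by Young's inequality), the form of $A_\eps$. The $w$-component is estimated using a Poincar\'e--Robin inequality on the cross-section: for transversal mean-zero functions with Robin parameter $1$, the Rayleigh quotient is bounded below by $c/(\eps s^p)^2-O(1/(\eps s^p))$, which dominates $\eps^{2/(p-2)}|E_j(A_1)|$ precisely on the part of $V_{\eps,L}$ where $s$ is not comparable to $L$. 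The main obstacle, and the origin of the additive constant $-K_2$ in the statement, is the region where $s$ is of order $L$: there the cross-section is thick and the Poincar\'e--Robin bound degrades. One handles this region through an IMS partition of unity that isolates the tip of the peak from a Lipschitz collar, on which Corollary~\ref{cor-gris} (applied to the rescaled form with $\alpha=1$) yields only a bounded negative contribution, which is then absorbed into $-K_2$.
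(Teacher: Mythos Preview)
Your upper-bound sketch is correct and parallel to the paper's; the only cosmetic difference is that the paper (Lemma~\ref{lem6}) uses trial functions $f(s)\psi_{\eps,\rho(s,\eps)}(t)$ with $\psi$ the cross-sectional Robin ground state rather than constants in $x'$, but since $\psi=\text{const}+O(\eps)$ both choices give the same leading order.

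For the lower bound the strategy is right but the localization has a genuine gap. The paper performs the IMS cut at a \emph{fixed} height $s=a$ (Lemma~\ref{lem11}) and on the collar $\{a/2<s<L\}$ does \emph{not} invoke Corollary~\ref{cor-gris}: that region is not uniformly Lipschitz as $\eps\to0$, since after rescaling by $L$ its left endpoint $a/(2L)\to0$ and the domain redevelops a peak. Instead the paper discards $|\partial_s u|^2$ and uses the pointwise cross-sectional bound $E_1\big(B_{\eps s^p,\,\cdot}\big)\ge -C/(\eps s^p)+O(1)$ from Lemma~\ref{lem-1}(c), whose infimum over $s\in(a/2,L)$ is attained at $s=a/2$ and equals $-B/\eps$. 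This, not a bounded constant, is the origin of the threshold $-B/\eps$ in the statement; the additive $-K_2$ comes separately from the tip analysis and the IMS error. Your proposed collar $\{s\sim L\}$ \emph{does} rescale to a fixed Lipschitz domain (radius and length both scale like $L$), so Corollary~\ref{cor-gris} would indeed give a bounded lower bound there; but then your tip extends to $s\sim L$, and your Poincar\'e--Robin bound $c/(\eps s^p)^2-O(1/(\eps s^p))$ for mean-zero $w$ already degenerates once $\eps s^p\sim 1$, i.e.\ for $s\gtrsim\eps^{-1/p}\ll L$. The intermediate range $\eps^{-1/p}\lesssim s\lesssim L$ is therefore covered by neither piece of your argument.

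A second, related difference: on the tip $(0,a)$ the paper (Lemma~\ref{lem9}) projects onto the Robin ground state $\psi_{\eps,\varrho(s,\eps)}$ rather than the constant. This diagonalizes the transversal Robin form exactly, so there is no boundary cross-term; the cross-terms that do arise live in the $\partial_s$ sector and are controlled by the estimate $\int_{\B_\eps}|\partial_s\psi|^2\le K\eps^2$ of Lemma~\ref{lem-1}(e), which requires the effective Robin parameter $\varrho\sim s^p$ to stay bounded---another reason the fixed-$a$ truncation is essential. Your projection onto the mean avoids the $\psi_s$ terms but creates a boundary cross-term $\int_{\partial\B_\eps}f\cdot w$ (mean-zero $w$ is orthogonal to constants in $L^2(\B_\eps)$, not in $L^2(\partial\B_\eps)$), which you do not address.
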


 Before presenting the main results of our paper, let us state two simple but important consequences of Proposition \ref{prop-main}. 

\begin{cor}\label{prop4}
There exist $K_1>0$, $k_1>0$, $\alpha_0>0$ such that
\[
E_j(\Tilde S^{\, m,\delta}_\alpha)\le (1-k_1\alpha^{1-p})\,  \Big(\frac \alpha m \Big)^{\frac{2}{2-p}}\,  E_j(A_1)
\text{ for all }  \alpha>\alpha_0, \ 1\leq  j\leq \cN(A_1,-K_1\alpha^{\frac{p(1-p)}{2-p}}). 
\]
Furthermore, there exist  $K_2>0$, $k_2>0,$ and $B>0$ such that
\[
E_j(S^{m,\delta}_\alpha)\ge (1+k_2\alpha^{1-p})\,  \Big(\frac \alpha m \Big)^{\frac{2}{2-p}}\,  E_j(A_1) - K_2
\text{ for all }  \alpha>\alpha_0, \  1\leq j\leq \cN(S^{m,\delta}_\alpha,-B \alpha^{p+1}).
\]
\end{cor}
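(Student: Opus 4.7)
The plan is to deduce the corollary directly from Proposition~\ref{prop-main} by unfolding the rescaling of Subsection~2.4. Combining \eqref{eq-base2} with \eqref{rescaling} and \eqref{Q-eps} gives the exact identities
\[
E_j(\Tilde S^{\,m,\delta}_\alpha)=\alpha^2 E_j(\Tilde Q_{\eps,b}),\qquad E_j(S^{m,\delta}_\alpha)=\alpha^2 E_j(Q_{\eps,b}),
\]
with $\eps:=m\alpha^{1-p}$ and $b:=m^{1/(p-1)}\delta$. Here $b$ is fixed once $\delta$ is, so it plays no further role, and the assumption $\eps\in(0,\eps_0)$ translates (using $p>1$) into $\alpha>\alpha_0:=(m/\eps_0)^{1/(p-1)}$.

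The key algebraic identity I would then verify is
\[
\alpha^2\,\eps^{\frac{2}{p-2}}=\Big(\frac{\alpha}{m}\Big)^{\frac{2}{2-p}},
\]
obtained by a direct computation using $2+\tfrac{2(1-p)}{p-2}=\tfrac{2}{2-p}$. Multiplying \eqref{main-upperb} by $\alpha^2$ and substituting $\eps=m\alpha^{1-p}$ everywhere yields
\[
E_j(\Tilde S^{\,m,\delta}_\alpha)\le (1-k_1 m\alpha^{1-p})\Big(\frac{\alpha}{m}\Big)^{\frac{2}{2-p}}E_j(A_1),
\]
which is the stated upper bound after relabelling $k_1 m$ as $k_1$. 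The same procedure applied to \eqref{main-lowerb} yields the lower bound, with the additive remainder being relabelled in the process.

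For the admissible ranges of $j$, I would translate the thresholds via the substitution $\eps=m\alpha^{1-p}$. Since $\eps^{p/(2-p)}=m^{p/(2-p)}\alpha^{p(1-p)/(2-p)}$, the upper-bound condition $j\le \cN(A_1,-K_1\eps^{p/(2-p)})$ becomes $j\le \cN(A_1,-K_1\alpha^{p(1-p)/(2-p)})$ after absorbing $m^{p/(2-p)}$ into $K_1$. For the lower bound, the identity $E_j(S^{m,\delta}_\alpha)=\alpha^2 E_j(Q_{\eps,b})$ immediately gives the equivalence
\[
E_j(Q_{\eps,b})<-B/\eps \iff E_j(S^{m,\delta}_\alpha)<-(B/m)\,\alpha^{p+1},
\]
hence $\cN(Q_{\eps,b},-B/\eps)=\cN(S^{m,\delta}_\alpha,-B'\alpha^{p+1})$ with $B':=B/m$, which, after renaming $B'$ as $B$, is exactly the form required.

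All the genuine analytic content is already absorbed into Proposition~\ref{prop-main}; what remains is a change of variables. The only real pitfall is the bookkeeping of the exponents $2/(p-2)$, $p/(2-p)$, $1-p$, $p+1$ and of the $m$-dependent prefactors, all of which are cleanly absorbed into the constants $k_1,k_2,K_1,K_2,B,\alpha_0$ after relabelling, so I do not anticipate any further obstacle.
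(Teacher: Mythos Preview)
Your approach is exactly the paper's; its own proof is the single sentence ``The inverse passage from $\eps$ to $\alpha$ in Proposition~\ref{prop-main} implies the claim,'' and you have correctly spelled out the underlying bookkeeping.

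One point deserves care, though: your phrase ``the additive remainder being relabelled in the process'' glosses over the fact that multiplying \eqref{main-lowerb} through by $\alpha^2$ turns the constant $-K_2$ into $-K_2\alpha^2$, which is \emph{not} a constant and cannot simply be renamed. This is harmless for the downstream applications, since both Theorems~\ref{thm-main1} and~\ref{thm-main2} only require a remainder of order $o(\alpha^{2/(2-p)})$ and $\alpha^2=o(\alpha^{2/(2-p)})$ for $p\in(1,2)$; the paper's one-line proof carries the same imprecision. If you want the corollary exactly as stated with a bounded additive term, one extra line suffices: for $j$ in the admissible range the condition $E_j(S^{m,\delta}_\alpha)<-B\alpha^{p+1}$ together with the bound just derived forces $|E_j(A_1)|\gtrsim \alpha^{p(1-p)/(2-p)}$, and since the exponents satisfy $(1-p)+\tfrac{2}{2-p}+\tfrac{p(1-p)}{2-p}=2$, the term $K_2\alpha^2$ can be absorbed into $k_2\alpha^{1-p}(\alpha/m)^{2/(2-p)}|E_j(A_1)|$ by enlarging $k_2$.
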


\begin{proof}
The inverse passage from $\eps$ to $\alpha$ in Proposition \ref{prop-main} implies the claim.
\end{proof}

\begin{cor}\label{prop5}
There exists $\delta_0>0$ such that for any $\delta\in(0,\delta_0)$ the following assertions hold true:

\begin{enumerate}
\item[(a)] there exist $K_1>0$, $k_1>0$, $\alpha_0>0$ such that
\[
E_j(R^{D,\delta}_\alpha)\le (1-k_1 \delta^{p-1}) \,  \Big(\frac \alpha m \Big)^{\frac{2}{2-p}}\,  E_j(A_1)
\text{ for all }  \alpha>\alpha_0, \ 1\leq j\leq \cN(A_1,-K_1\alpha^{\frac{p(1-p)}{2-p}}),
\]
\item[(b)]
there exist  $K_2>0$, $k_2>0$, $B>0$ such that
\[
E_j(R^{N,\delta}_\alpha)\ge (1+k_2\delta^{p-1}) \,  \Big(\frac \alpha m \Big)^{\frac{2}{2-p}}\,  E_j(A_1) - K_2
\text{ for all }  \alpha>\alpha_0, \  1\leq   j\leq \cN(R^{N,\delta}_\alpha,-B \alpha^{p+1}).
\]
\end{enumerate}
\end{cor}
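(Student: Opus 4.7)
The plan is to chain Proposition \ref{prop2}, which compares $R^{D/N,\delta}_\alpha$ to the straightened-peak operators $\Tilde S^{\, m,\delta}_{\alpha_D}$ and $S^{m,\delta}_{\alpha_N}$, with Corollary \ref{prop4}, which controls the eigenvalues of those model operators in terms of $E_j(A_1)$. The entire content of the corollary is to show that the $\cO(\delta)$ diffeomorphism error from Proposition \ref{prop2} and the $\cO(\alpha^{1-p})$ rescaling error from Corollary \ref{prop4} combine into the weaker multiplicative factor $1\mp k_j\delta^{p-1}$ required by the statement.

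For part (a) I would insert the upper bound of Corollary~\ref{prop4}, applied at $\alpha_D=\frac{1-c\delta}{1+c\delta}\alpha$, into the inequality $E_j(R^{D,\delta}_\alpha)\le (1+c\delta) E_j(\Tilde S^{\, m,\delta}_{\alpha_D})$. Using $\alpha_D/\alpha=(1-c\delta)/(1+c\delta)$, the resulting upper bound is of the form $\beta(\delta,\alpha)(\alpha/m)^{2/(2-p)} E_j(A_1)$ with
\begin{equation*}
\beta(\delta,\alpha)=(1+c\delta)\big(1-\Hat k_1\alpha_D^{1-p}\big)\Big(\tfrac{1-c\delta}{1+c\delta}\Big)^{\frac{2}{2-p}}=1-c_1\delta+\cO(\delta^2)+\cO(\alpha^{1-p}),
\end{equation*}
where $c_1=c(p+2)/(2-p)>0$ and $\Hat k_1$ denotes the constant from Corollary~\ref{prop4}. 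Because $p\in(1,2)$ yields $\delta^{p-1}>\delta$ for $\delta\in(0,1)$ and $\alpha^{1-p}\to 0$, I can fix any $k_1>c_1$ and then choose $\delta_0$ small and $\alpha_0$ large enough that $\beta(\delta,\alpha)\ge 1-k_1\delta^{p-1}$ throughout the relevant regime. Multiplying by the negative quantity $E_j(A_1)$ flips the inequality and delivers the claimed upper bound.

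Part (b) runs in parallel: I would insert the lower bound of Corollary~\ref{prop4} at $\alpha_N=\frac{1+c\delta}{1-c\delta}\alpha$ into $E_j(R^{N,\delta}_\alpha)\ge (1-c\delta)E_j(S^{m,\delta}_{\alpha_N})$. The multiplicative prefactor becomes $\gamma(\delta,\alpha)=1+c_2\delta+\cO(\delta^2)+\cO(\alpha^{1-p})$ with the same $c_2=c(p+2)/(2-p)$, while the additive remainder $(1-c\delta)\Hat K_2$ is absorbed into a single constant $K_2\ge\Hat K_2$ independent of $\alpha$. The analogous bound $\gamma(\delta,\alpha)\le 1+k_2\delta^{p-1}$ then holds for any $k_2>c_2$ once $\delta_0$ is small, and multiplying by $E_j(A_1)<0$ gives the required lower bound.

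The admissible range of $j$ is handled by the same two-way comparison. For (a) the condition $j\le \cN\big(A_1,-\Hat K_1\alpha_D^{p(1-p)/(2-p)}\big)$ coming from Corollary~\ref{prop4} is rewritten in the form stated in part (a) using $\alpha_D\sim\alpha$ and readjusting $K_1$. For (b) one argues in the reverse direction: if $E_j(R^{N,\delta}_\alpha)<-B\alpha^{p+1}$, then the inequality $E_j(R^{N,\delta}_\alpha)\ge(1-c\delta)E_j(S^{m,\delta}_{\alpha_N})$ forces $E_j(S^{m,\delta}_{\alpha_N})<-B\alpha^{p+1}/(1-c\delta)$, which dominates $-\Hat B\,\alpha_N^{p+1}$ as soon as $B$ is taken slightly larger than the threshold $\Hat B$ of Corollary~\ref{prop4}. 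The only genuine subtlety throughout is the sign bookkeeping caused by $E_j(A_1)<0$; everything else is a mechanical concatenation of the two preceding results.
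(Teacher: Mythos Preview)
Your proposal is correct and follows precisely the paper's own approach: the paper's proof is the single sentence ``This follows by substituting Corollary~\ref{prop4} into Proposition~\ref{prop2},'' and you have simply written out the details of that substitution, tracking the multiplicative prefactors, the additive remainder, and the admissible range of $j$. Your observation that $\delta<\delta^{p-1}$ for $\delta\in(0,1)$ (since $p-1<1$) is exactly what makes the $\cO(\delta)$ diffeomorphism error absorbable into $k_j\delta^{p-1}$, and your sign bookkeeping with $E_j(A_1)<0$ is accurate.
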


\begin{proof}
This follows by substituting Corollary~\ref{prop4} into Proposition~\ref{prop2}. 
\end{proof}

\medskip


\subsection{\bf Proof of main results}

In order to prove Theorem~\ref{thm-main1} it suffices to insert the inequalities of Corollary~\ref{prop5} into Proposition~\ref{prop1} and to remark that $\delta>0$ can be chosen arbitrarily small.

Let us now turn to a proof of Theorem~\ref{thm-main2}. To this aim, we need an additional result on the operator $A_1$, which is proved in subsection~\ref{compop}.
\begin{prop} \label{prop-weyl}
For $\eps\to 0^+$ there holds $\cN(A_1,-\eps) =  J_p \, \eps^{-\frac{2-p}{2p}} + o\big(\eps^{-\frac{2-p}{2p}}\big)$ with
\[
J_p  = \dfrac{n^{\frac 1p}}{2\pi}\int_0^1\sqrt{\dfrac{1-s^p}{s^p}}\, ds.
\]
\end{prop}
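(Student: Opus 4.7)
The plan is to derive the asymptotic from the classical one-dimensional semiclassical (Weyl) counting formula applied to $A_1 = -d^2/ds^2 + V(s)$ on $L^2(0,\infty)$, where
\[
V(s) = \frac{(np-1)^2-1}{4s^2} - \frac{n}{s^p}.
\]
Since $p \in (1,2)$, the attractive tail $-n/s^p$ dominates at infinity and drives the infinite accumulation of eigenvalues at $0^-$, while the centrifugal term is of lower order at infinity and should only affect the subleading part of the count.

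The first step is to localize to the far field by Dirichlet--Neumann bracketing on $(0,\infty)=(0,R)\cup(R,\infty)$ for a fixed large $R>0$. The piece on $(0,R)$ has spectrum bounded below uniformly in the boundary condition at $R$, hence contributes $O(1)$ eigenvalues below $-\eps$ which are absorbed in the $o$-term. For the tail on $(R,\infty)$, the potential is smooth and tends to $0^-$; a further $\eps$-dependent Dirichlet--Neumann partition of the classically allowed region $(R,s_+(\eps))$, where $s_+(\eps)=(n/\eps)^{1/p}(1+o(1))$, into subintervals on which $V$ is almost constant yields Riemann sums that sandwich the counting function and converge to the semiclassical integral
\[
\frac{1}{\pi}\int_R^\infty \sqrt{\bigl(-\eps - V(s)\bigr)_+}\, ds.
\]
Contributions from the classically forbidden region $s>s_+(\eps)$ are of lower order, e.g.\ by a Bargmann-type bound or an Agmon estimate of exponential decay.

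The final step is the asymptotic evaluation of this integral as $\eps \to 0^+$. The rescaling $s=(n/\eps)^{1/p}\,t$ turns $\sqrt{-\eps - V(s)}$ into
\[
\sqrt{\eps}\,\sqrt{\,t^{-p} - 1 \;-\; \frac{c\,\eps^{(2-p)/p}}{n^{2/p}\,t^2}\,}, \qquad c:=\frac{(np-1)^2-1}{4},
\]
and since $(2-p)/p>0$, the centrifugal correction vanishes uniformly on $[\tau,1]$ as $\eps\to 0$ for any fixed $\tau>0$. Collecting the Jacobian $(n/\eps)^{1/p}$ from $ds$ and the prefactor $\sqrt{\eps}$ produces the leading Weyl contribution proportional to $n^{1/p}\,\eps^{-(2-p)/(2p)}\int_0^1\sqrt{(1-t^p)/t^p}\,dt$, matching the announced form. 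The main technical obstacle is the uniform control of the two errors at the bracketing step, one arising from replacing $V$ by its tail $-n/s^p$ and one from truncating the classically forbidden region; both must be handled by choosing the partition mesh $\eps$-dependently so that $V$ varies by $o(\eps)$ on each subinterval, which is the standard device that makes the one-dimensional Weyl formula sharp for this class of slowly decaying potentials.
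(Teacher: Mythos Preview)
Your proposal is correct and follows essentially the same route as the paper: Dirichlet bracketing at a fixed large cut $R$ to isolate a compact piece contributing $O(1)$ eigenvalues, then the standard one-dimensional Weyl count on the tail $(R,\infty)$. The only organizational difference is in how the centrifugal term $\frac{(np-1)^2-1}{4s^2}$ is disposed of. The paper removes it \emph{before} invoking the Weyl formula: for any $\delta>0$ one can take $a$ large enough that $\big|\frac{(np-1)^2-1}{4s^2}\big|\le \frac{\delta}{s^p}$ on $(a,\infty)$, which sandwiches the tail operator between two operators of the pure form $-d^2/ds^2-k/s^p$ with $k\in\{n,n+\delta\}$; a black-box citation of Reed--Simon, Theorem~XIII.82, then gives the asymptotics for each, and $\delta\to0$ closes the gap. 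You instead keep the full potential $V$ through the Weyl integral and remove the centrifugal correction \emph{after}, via the rescaling $s=(n/\eps)^{1/p}t$ that makes the correction visibly $O(\eps^{(2-p)/p})$. Both are equally valid; the paper's variant has the advantage of reducing to a single clean model potential and a one-line citation, while yours makes the suppression of the $s^{-2}$ term more quantitatively explicit.
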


Now let us take $\delta_0>0$ sufficiently small and  $\delta\in(0,\delta_0)$.
Let $B>0$ and $\alpha>0$ be sufficiently large. Using the lower bound of Proposition~\ref{prop1} we see that if $E_j(Q^\Omega_\alpha)\le -B\alpha^{p+1}$, then
one also has $E_j(R^{N,\delta}_\alpha)<-B\alpha^{p+1}$. By Corollary~\ref{prop5}, if $B$ is suitable chosen, then with 
\[
E_j(A_1)\le -(1-k \delta^{p-1}) B m^{\frac{2}{2-p}} \alpha^{\frac{p(p-1)}{p-2}},
\]
where $k$ does not depend on $\delta$.
It follows that
\[
\cN(Q^\Omega_\alpha,-B\alpha^{p+1})\le \cN\Big(A_1, -(1-k \delta^{p-1}) B m^{\frac{2}{2-p}} \alpha^{\frac{p(p-1)}{p-2}}\Big),
\]
and by Proposition~\eqref{prop-weyl} one has
\[
\cN(Q^\Omega_\alpha,-B\alpha^{p+1})\le J_p (1-k \delta^{p-1})^{-\frac{2-p}{2p}} B^{-\frac{2-p}{2p}} m^{-\frac{1}{p}} \alpha^{\frac{p-1}{2}} + o(\alpha^{\frac{p-1}{2}}).
\]
As $\delta>0$ can be chosen sufficiently small, we obtain the upper bound of Theorem~\ref{thm-main2}. The lower bound is obtained in an analogous way.


\subsection{Outline of the paper} 


The rest of the paper is dedicated to the proof of the key Proposition \ref{prop-main}. This is done in several steps. First we prove some auxiliary results on the one-dimensional operator $A_1$ and show that it can be approximated by a truncated operator $L_{\eps,a}$ acting on an interval $(0,a)$ for small $\eps$, see Section \ref{sec-aux}. In Section \ref{sec-1d} we study the effective contribution from the Robin Laplacian defined on the cross-section of the peak, see Lemma \ref{lem-1}. In Section \ref{sec-teps} we establish a connection between the 
truncated one-dimensional operator $L_{\eps,a}$ and the operator $\Tilde S_1^{\eps,a}$ for a fixed value of $a$ and small $\eps$. Finally, the operators $Q_{\eps,b}$ and $\Tilde Q_{\, \eps,b}$ are studied in Section \ref{sec-qeb} using an additional truncation, which completes the proof.


\section{\bf Auxiliary estimates}
\label{sec-aux}

In this section we prove a number of estimates for various operators appearing in the proof of Proposition~\ref{prop-main}.
For a scalar product in a Hilbert space $\HH$
we will use the symbol $\langle \cdot\, , \cdot \rangle_{\HH}$. 
Given $r>0$, we denote by 
$$
\B_r = \{x\in \R^n:\, |x| < r \} 
$$ 
the $n$-dimensional ball of radius $r$ entered in the origin.
Finally, $\omega_k$ stands for the surface area of the $k$-dimensional unit sphere.

\subsection{One-dimensional comparison operators}\label{compop}

For $\lambda>0$, consider the symmetric differential operator in $L^2(0,\infty)$ given by
\[
C_0^\infty(0,\infty) \ni f \mapsto -f'' + \bigg(\frac{(np-1)^2-1}{4s^2}-\frac{n}{\lambda s^p}\bigg) f.
\]
Since $(np-1)^2>0$, the operator is semi-bounded from below in view of the classical Hardy inequality,
\begin{equation} \label{hardy}
\int_0^\infty |f'|^2 ds\ \ge\  \int_0^\infty \dfrac{f^2}{4s^2}\ ds  \text{ for }  f\in C^\infty_0(0,\infty),
\end{equation}
and we denote by $A_{\lambda}$ its Friedrichs extension in $L^2(0,\infty)$.
The potential term is for large enough $s$ attractive and decays at infinity as $s^{-p}$. Hence standard spectral theory arguments show
that the essential spectrum of $A_{\lambda}$ is $[0,+\infty)$ and that $A_{\lambda}$ has infinitely many negative eigenvalues
accumulating at zero. Moreover, a scaling argument shows the equalities
\begin{equation}
   \label{eq-hom0}
E_j( A_{\kappa\lambda}) = \kappa^{-\frac{2}{2-p}}\, E_j( A_{\lambda})\, , \quad \kappa>0,
\end{equation}
in particular, the individual eigenvalues of $A_\lambda$ are continuous in $\lambda$.

In what follows we will work with truncated versions of $A_\lambda$. Namely,
given $\lambda>0$ and $a>0$ we denote by $L_{\lambda,a}$ and $M_{\lambda,a}$ the Friedrichs extensions in $L^2(0,a)$ 
and $L^2(a,\infty)$ of the operators $C_0^\infty(0,a)\ni f\mapsto A_\lambda f$ and $C_0^\infty(a,\infty)\ni f\mapsto A_\lambda f$
respectively.

\begin{proof}[\bf Proof of Proposition \ref{prop-weyl}]
Since imposing a Dirichlet boundary at one point represents a perturbation of rank one of the resolvent of $A_1$, it follows that 
\[
\cN(A_1,-\eps)\, \leq \cN(L_{1,a}  \oplus M_{1,a}, -\eps)  +1
\]
holds for all $a>0$. As the operator $L_{1,a}$ has compact resolvent, it follows that $N_a:=\cN(L_{1,a},0)<+\infty$.
Hence the above inequality and the min-max principle show that 
\begin{equation} \label{sandw}
\cN(M_{1,a}, -\eps)  \, \leq\, \cN(A_1,-\eps)\, \leq \cN(M_{1,a}, -\eps)  +N_a+1.
\end{equation}
Let $\delta>0$, then the parameter $a$ can be chosen sufficiently large to have
\[
\frac{(np-1)^2-1}{4s^2} \ge -\dfrac{\delta}{s^p} \text{ for } s\in(a,\infty).
\]
Hence, if denote by $K_{k,a}$ the self-adjoint operator in $L^2(a,\infty)$ obtain as the Friedrichs extension of
\[
C_0^\infty(a,\infty)\ni f\mapsto -f'' -\frac{k}{s^p} \, f, \quad k>0,
\]
one has the form inequality $K_{n+\delta,a}\le M_{1,a}\le K_{n,a}$ implying, for any $\eps>0$,
\begin{equation} \label{sandw2}
\cN(K_{n,a}, -\eps)  \, \leq\, \cN(A_1,-\eps)\, \leq \cN(K_{n+\delta,a}, -\eps)  +N_a+1.
\end{equation}

At any fixed values of $k$ and $a$, the operator $K_{k,a}$ can be analyzed using standard approaches, in particular, by \cite[Theorem~XIII.82]{RS4} we have, for $\eps\to 0+$, 
\[
\cN(K_{k,a}, -\eps) \sim  \dfrac{1}{2\pi} \int_a^\infty \sqrt{\Big(\frac{k}{s^p}-\eps\Big)_+} ds
\]
where $x_+=x $ for $x\ge 0$ and $x_+=0$ for $x<0$, and an elementary analysis shows that
\[
\cN(K_{k,a}, -\eps)\sim \dfrac{k^{\frac{1}{p}}}{2\pi} \eps^{-\frac{2-p}{2p}} \int_0^1\sqrt{\dfrac{1-s^p}{s^p}}ds.
\]
It remains to substitute the last estimate into \eqref{sandw2} and to use the fact that $\delta>0$ can be chosen arbitrarily small.
\end{proof}

We are now going to relate the eigenvalues of  $L_{\lambda,a}$ to those of the comparison operator $A_\lambda$.
First remark that due to the min max-principle one has
\begin{equation}
   \label{eq-1dd}
	E_j(L_{\lambda,a})\ge E_j(A_\lambda) \text{ for any $a>0$, $\lambda>0$, $j\in\N$.}
\end{equation}
Let us now obtain an asymptotic upper bound for $E_j(L_{\lambda,a})$.

\begin{lemma}\label{lem7}
Let $a>0$. Then there exist $K>0$, $k>0$, $\eps_0>0$ such that
\[
E_j(L_{\eps,a}) \le \eps^{-\frac{2}{2-p}}\, E_j( A_1) +k
\text{ for all } \eps\in(0,\eps_0), \ \ j\in\big\{ 1, \dots, \cN(A_1, -K\eps^{\frac{p}{2-p}})\big\}.
\]
\end{lemma}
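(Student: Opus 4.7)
The plan is to apply the min-max principle with test functions constructed by truncating at $s=a$ the eigenfunctions of $A_\eps$, whose exponential concentration near the origin must make the truncation cost negligible. By the scaling identity \eqref{eq-hom0}, if $\phi_i$ is a normalized eigenfunction of $A_1$ corresponding to $E_i(A_1)$, then
$\psi_i(s):=\eps^{-1/(2(2-p))}\phi_i(s/\eps^{1/(2-p)})$
is a normalized eigenfunction of $A_\eps$ with eigenvalue $\eps^{-2/(2-p)}E_i(A_1)$, which is exactly the number that should majorize $E_j(L_{\eps,a})$.

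The main step is a uniform Agmon-type decay bound for $\phi_i$ valid for all indices $i\le\cN(A_1,-K\eps^{p/(2-p)})$. For the potential $V_1(t)=C/t^2-n/t^p$ with $C=((np-1)^2-1)/4$, the large-$t$ turning point at energy $E<0$ sits at $t_+(E)\sim(n/|E|)^{1/p}$, and the Agmon method yields
$\int_t^\infty\phi_i(\tau)^2\,d\tau\le C\exp\bigl(-2\int_{t_+}^{t}\sqrt{(V_1(\tau)-E_i(A_1))_+}\,d\tau\bigr)$
for $t>t_+$. The hypothesis $|E_i(A_1)|\ge K\eps^{p/(2-p)}$ forces $t_+\le(n/K)^{1/p}\eps^{-1/(2-p)}$; choosing $K$ large enough so that $t_+\le a/(4\eps^{1/(2-p)})$ uniformly in $i$ and using $\sqrt{V_1(\tau)-E}\gtrsim\sqrt{|E|}$ for $\tau\gg t_+$, one obtains after unwinding the scaling
$\int_{a/2}^\infty\psi_i(s)^2\,ds\le Ce^{-c/\sqrt{\eps}}$
with constants independent of $i$ and $\eps$.

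Finally I would fix an $\eps$-independent smooth cutoff $\chi_a\in C^\infty([0,a])$ with $\chi_a=1$ on $[0,a/2]$ and $\chi_a=0$ near $a$, so that $\tilde\psi_i:=\chi_a\psi_i$ lies in the form domain of $L_{\eps,a}$, and feed $\tilde\psi_1,\dots,\tilde\psi_j$ into the min-max principle. A standard IMS-type integration by parts combined with $A_\eps\psi_i=E_i(A_\eps)\psi_i$ and its symmetrisation give
$q_{\eps,a}(\tilde\psi_i,\tilde\psi_l)=\tfrac12\bigl(E_i(A_\eps)+E_l(A_\eps)\bigr)\int_0^a\chi_a^2\psi_i\psi_l\,ds+\int_0^a(\chi_a')^2\psi_i\psi_l\,ds,$
while Cauchy--Schwarz together with the decay estimate force $|G_{il}-\delta_{il}|\le Ce^{-c/\sqrt{\eps}}$ for the Gram matrix $G_{il}=\langle\tilde\psi_i,\tilde\psi_l\rangle$ and an estimate of the same order for the cutoff-error term. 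Since $j\le\cN(A_1,-K\eps^{p/(2-p)})=O(\eps^{-1/2})$ by Proposition~\ref{prop-weyl} and $|E_j(A_\eps)|=O(\eps^{-2/(2-p)})$, a routine perturbation argument on the pencil $(F,G)\approx(\mathrm{diag}(E_i(A_\eps)),I)$ shows that its $j$-th generalized eigenvalue exceeds $E_j(A_\eps)$ by at most $Cj|E_j(A_\eps)|e^{-c/\sqrt{\eps}}=o(1)$, and min-max delivers the claim with any fixed $k>0$ for $\eps$ small enough. The chief obstacle is calibrating the $\eps$-dependent Agmon rate $\sqrt{|E_i(A_1)|}$ against the truncation distance $a\eps^{-1/(2-p)}$ so that the resulting exponential $e^{-c/\sqrt{\eps}}$ dominates both the blow-up of $|E_j(A_\eps)|$ and the cardinality $j$; this balance explains the admissible index range and the required largeness of $K$.
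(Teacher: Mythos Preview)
Your approach is correct in outline but takes a substantially different route from the paper. The paper avoids eigenfunctions, Agmon estimates, and matrix perturbation entirely: it applies the IMS localization formula directly at the operator level to obtain the inequality $A_\eps + k \ge L_{\eps,a}\oplus M_{\eps,a/4}$ in the sense of forms (with $k=\|\chi_1'\|_\infty^2+\|\chi_2'\|_\infty^2$ fixed), and then bounds the tail piece $M_{\eps,a/4}\ge -K_0\eps^{-1}$ using only the Hardy inequality~\eqref{hardy}. The admissible range of $j$ is then exactly the set of indices for which $E_j(A_\eps)+k<-K_0\eps^{-1}$, so that the $j$th eigenvalue of the direct sum comes from the $L_{\eps,a}$ block. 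This is a handful of lines and uses nothing about the eigenfunctions of $A_\eps$. Your argument instead manufactures test functions from the rescaled eigenfunctions of $A_1$, establishes uniform Agmon decay beyond the outer turning point, and then runs a perturbation of the $j\times j$ Gram--form pencil; the calibration you describe (so that $j\cdot|E_j(A_\eps)|\cdot e^{-c/\sqrt{\eps}}\to 0$) is the genuine content and is correct, but it is considerably more labor-intensive, and the several places where uniformity in the index $i$ is needed (the Agmon constant, the form-domain membership of $\chi_a\psi_i$ for the Friedrichs extension, and the operator-norm control of a growing matrix) each require justification that you only sketch. In exchange your method yields an error that is $o(1)$ rather than a fixed constant $k$, which is stronger than what the lemma asserts but is not needed downstream.
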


\begin{proof}
The proof is quite standard using a so-called IMS partition of unity.
Let $\chi_1$ and $\chi_2$ be two smooth functions on $\R$ such that
$\chi_1^2+\chi_2^2=1$, $\chi_1(s)=0$ for $s>\frac{3}{4}\, a$, $\chi_2(s)=0$ for $s<\frac{1}{2}\, a$.
We set $k:=\|\chi_1'\|_\infty^2+\|\chi_2'\|_\infty^2$.
A direct computation shows that for $f\in C^\infty_0(0,\infty)$ one has
\begin{align*}
\int_0^\infty (f')^2 ds & =\int_0^\infty \big|(\chi_1 f)'\big|^2 ds +\int_0^\infty \big|(\chi_2 f)'\big|^2 ds
-\int_0^\infty \Big((\chi_1')^2+(\chi_2')^2\Big) f^2\, ds\\
& \ge \int_0^\infty \big|(\chi_1 f)'\big|^2 ds +\int_0^\infty \big|(\chi_2 f)'\big|^2 ds
- k \|f\|^2_{L^2(0,\infty)} .
\end{align*}
Hence
\[
\big\langle f, A_{\eps} f\big\rangle_{L^2(0,\infty)}
\ge \big\langle \chi_1 f, A_{\eps}(\chi_1 f)\big\rangle_{L^2(0,\infty)}+\big\langle \chi_2 f, A_{\eps}(\chi_2 f)\big\rangle_{L^2(0,\infty)}
- k \|f\|^2_{L^2(0,\infty)}, 
\]
which can be rewritten as
\[
\big\langle f, A_{\eps} f\big\rangle_{L^2(0,\infty)}+k \|f\|^2_{L^2(0,\infty)}\ge \big\langle \chi_1 f, A_{\eps}(\chi_1 f)\big\rangle_{L^2(0,a)}+\big\langle 
\chi_2 f, A_{ \eps}(\chi_2 f)\big\rangle_{L^2(a/4,\infty)}.
\]
Using the equality
\[
\|f\|^2_{L^2(0,\infty)}=\|\chi_1 f\|^2_{L^2(0,a)}+\|\chi_2 f\|^2_{L^2(a/4,\infty)},
\quad \chi_1 f\in C^\infty_0(0,a), \quad \chi_2 f\in C^\infty_0(a/4,\infty)
\]
and the min-max principle one obtains
\begin{align}
E_j(A_{\eps})+k & \ge \inf_{\substack{S\subset C^\infty_0(0,\infty)\\ \dim S=j}} \sup_{f\in S,\, f\ne 0} \dfrac{\big\langle \chi_1 f, A_{\eps}(\chi_1 f)\big\rangle_{L^2(0,a)}+\big\langle \chi_2 f, A_{\eps}(\chi_2 f)\big\rangle_{L^2(a/4,\infty)}}{\|f\|^2_{L^2(0,\infty)}} \nonumber \\
& = \inf_{\substack{S\subset C^\infty_0(0,\infty)\\\dim S=j}} \sup_{f\in S, \,f\ne 0} \dfrac{\big\langle \chi_1 f, A_{\eps}(\chi_1 f)\big\rangle_{L^2(0,a)}+\big\langle \chi_2 f, A_{\eps}(\chi_2 f)\big\rangle_{L^2(a/4,\infty)}}{\|\chi_1 f\|^2_{L^2(0,a)}+\|\chi_2 f\|^2_{L^2(a/4,\infty)}}  \nonumber \\
& \ge \inf_{\substack{S\subset C^\infty_0(0,a)\oplus C^\infty_0(a/4,\infty)\\ \dim S=j}}\,  \sup_{(u_1,u_2) \in S} \dfrac{\big\langle u_1 , A_{\eps}\, u_1\big\rangle_{L^2(0,a)}
+\big\langle u_2 , A_{\eps}\, u_2\big\rangle_{L^2(a/4,\infty)}}{\|u_1\|^2_{L^2(0,a)}+\|u_2\|^2_{L^2(a/4,\infty)}}  \nonumber \\
& =E_j\big(L_{\eps,a}  \oplus M_{\eps, a/4}\big) .
    \label{eq-ineq00}
\end{align}
With the help of the Hardy inequality \eqref{hardy} we conclude that $M_{\eps, a/4}\ge  -K_0 \eps^{-1}$ for $K_0:=4^p a^{-p}$.
Now take any $K>K_0$ and set $\eps_0:=(K-K_0)/k$, then for any $j\le \cN(A_1, -K\eps^{\frac{p}{2-p}})$ and $\eps\in(0,\eps_0)$ one has then, using  \eqref{eq-hom0},
\[
E_j(A_{\eps})+k=\eps^{-\frac{2}{2-p}} E_j(A_1)+k
< -K\eps^{-1}+k=-(K-k\eps) \eps^{-1} \le  -K_0\eps^{-1}\le E_1(M_{\eps, a/4}). 
\]
This in combination with \eqref{eq-ineq00} shows that $E_j(L_{\eps,a} \oplus M_{\eps,a/4})=E_j(L_{\eps,a})$ and the result follows. 
\end{proof}

\begin{lemma}\label{lem-nnl}
Let $a>0$, then there exist $\eps_0>0$ and $K>0$ such that
\[
\cN(L_{\eps,a},0)\ge \cN(A_{1}, -K\eps^{\frac{p}{2-p}}) \qquad \forall\, \eps\in(0,\eps_0). 
\]
\end{lemma}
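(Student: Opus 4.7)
The plan is to extract this directly from Lemma~\ref{lem7}, which already gives an upper bound on the individual eigenvalues $E_j(L_{\eps,a})$ in terms of $E_j(A_1)$, and to show that under the constraint on $j$, the right-hand side of that inequality is negative.

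More concretely, I would fix the constants $K>0$ and $k>0$ and the threshold $\eps_0>0$ produced by Lemma~\ref{lem7}. For any $\eps\in(0,\eps_0)$ and any index $j$ with $1\le j\le \cN(A_1,-K\eps^{p/(2-p)})$, the definition of $\cN$ gives $E_j(A_1)<-K\eps^{p/(2-p)}$, and Lemma~\ref{lem7} then yields
\[
E_j(L_{\eps,a})\ \le\ \eps^{-\frac{2}{2-p}}E_j(A_1)+k\ <\ -K\,\eps^{-\frac{2}{2-p}+\frac{p}{2-p}}+k\ =\ -K\eps^{-1}+k.
\]
The exponent simplification $-\tfrac{2}{2-p}+\tfrac{p}{2-p}=-1$ is the key algebraic point, and it comes out precisely because the threshold $-K\eps^{p/(2-p)}$ in Lemma~\ref{lem7} was chosen so that multiplication by the scaling factor $\eps^{-2/(2-p)}$ produces a quantity of order $\eps^{-1}$.

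Now it suffices to shrink $\eps_0$ (replacing it by, say, $\min(\eps_0,K/(2k))$) so that $-K\eps^{-1}+k<0$ for all $\eps\in(0,\eps_0)$. Under that restriction every such $E_j(L_{\eps,a})$ is strictly negative, so $j\le \cN(L_{\eps,a},0)$. Taking the supremum over such $j$ gives $\cN(L_{\eps,a},0)\ge \cN(A_1,-K\eps^{p/(2-p)})$, as required.

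I do not anticipate a real obstacle here: the whole statement is a bookkeeping consequence of Lemma~\ref{lem7} combined with the homogeneity relation \eqref{eq-hom0} (already used inside that lemma). The only mild care needed is to keep track of the strict versus non-strict inequality in the definition of $\cN$ and to confirm that the constants $K$ and $k$ inherited from Lemma~\ref{lem7} can be reused, which is immediate because $L_{\eps,a}$ and $A_1$ here are the same operators that appear there.
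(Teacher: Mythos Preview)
Your proof is correct and follows essentially the same approach as the paper's own argument: apply Lemma~\ref{lem7}, observe that for the admissible indices $j$ the resulting upper bound on $E_j(L_{\eps,a})$ is at most $-K\eps^{-1}+k$, and then shrink $\eps_0$ so that this quantity is negative. The only difference is cosmetic---you spell out the exponent computation explicitly, whereas the paper absorbs it into a single line.
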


\begin{proof}
It follows from Lemma~\ref{lem7} that one can find $K>0$, $k>0$ and $\eps_0>0$ such that
\[
E_j(L_{\eps,a})\le -\dfrac{K}{\eps}+k \text{ for $\eps\in(0,\eps_0)$ and $j\le \cN(A_1,-K\eps^{\frac{p}{2-p}})$.}
\]
Adjust the value $\eps_0$ to have $-K/\eps_0+k< 0$, then
$E_j(L_{\eps,a})<0$ for all $j\le \cN(A_1,-K\eps^{\frac{p}{2-p}})$.
\end{proof}

\subsection{\bf Robin Laplacian on a ball} 
\label{sec-1d}

Let $B_{\eps,r}$ be the self-adjoint operator in $L^2(\B_\eps)$ generated by the quadratic form 
\begin{equation} \label{d-form} 
b_{\eps,r}(f,f) =\int_{\B_\eps} |\nabla f|^2\, dy   -  r\! \int_{\partial\B_\eps} |f|^2\, d\sigma,  \qquad  f\in H^1(\B_\eps),
\end{equation} 
where $r\in \R$ and let $E_j(B_{\eps,r})$ denote the eigenvalues of $B_{\eps,r}$.
In other words, the operator $B_{\eps,r}$ is the Laplacian $f\mapsto -\Delta f$
with the Robin boundary condition $D_n f = r f$ with $D_n$ being the outward normal derivative.

\smallskip

\noindent We have

\begin{lemma} \label{lem-1}

The following assertions hold true:

\begin{enumerate}

\item[(a)] $E_j(B_{\eps,r}) = \eps^{-2} E_j(B_{1,\eps r})$. 

\smallskip

\item[(b)] The mapping $\R\ni x\mapsto E_1(B_{1,x})\in \R$ is $C^\infty$. Moreover, if $\psi_{\eps,r}$ denotes the positive eigenfunction relative to $E_1(B_{\eps,r})$ 
and normalised to $1$ in $L^2(\B_\eps)$, then for any $\eps>0$ the mapping $\R\ni r\mapsto \psi_{\eps,r}\in L^2(\B_\eps)$ is $C^\infty$. 

\smallskip

\item[(c)] There exists $\varphi\in L^\infty(0,\infty)$ such that 
\begin{equation} \label{ej-nd} 
E_1(B_{1,x}) = -nx +x^2 \varphi(x) \quad \forall\, x >0. 
\end{equation} 

\smallskip

\item[(d)] Let $E_2^N>0$ denote the second eigenvalue of the Neumann Laplacian on $\B_1$. Then 
$$
E_2(B_{1,x})  = E_2^N +o(1), \qquad x\to 0. 
$$

\smallskip

\item[(e)] Let $\psi_{\eps,r}$ be as in part $({\rm b})$. Then 
for any $r_0>0$ there exists $\eps_0>0$ and a constant $K>0$ such that 
\begin{equation} \label{eq-kp}
\int_{\B_\eps} \big|\partial_r \psi_{\eps,r}(y)\big|^2\, dy  \ \leq \ K\, \eps^2 \qquad  \forall\, \eps\in (0,\eps_0), \ \ \forall\, r \in (0,r_0).
\end{equation} 
\end{enumerate}

\end{lemma}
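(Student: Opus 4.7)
I would start with a direct dilation: putting $y=\eps z$ and $g(z)=f(\eps z)$ gives
\begin{equation*}
b_{\eps,r}(f,f)=\eps^{n-2}\!\int_{\B_1}|\nabla g|^2\,dz-r\eps^{n-1}\!\int_{\partial\B_1}g^2\,d\sigma,\qquad \|f\|^2_{L^2(\B_\eps)}=\eps^n\|g\|^2_{L^2(\B_1)},
\end{equation*}
so the Rayleigh quotient of $b_{\eps,r}$ equals $\eps^{-2}$ times the Rayleigh quotient of $b_{1,\eps r}$; the min-max principle proves (a) and yields in addition the pointwise identity $\psi_{\eps,r}(y)=\eps^{-n/2}\psi_{1,\eps r}(y/\eps)$ that I will reuse in (e). For (b), the forms $b_{1,x}$ depend affinely on $x$ on the fixed form domain $H^1(\B_1)$, so $\{B_{1,x}\}_{x\in\R}$ is a self-adjoint holomorphic family of type (B) in Kato's sense. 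Since the Robin ground state is strictly positive (Perron--Frobenius applied to the heat semigroup), $E_1(B_{1,x})$ is simple for every $x\in\R$, and analytic perturbation theory yields the real-analytic dependence on $x$ of $E_1(B_{1,x})$ and of the normalised ground state $\psi_{1,x}\in L^2(\B_1)$.

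\textbf{Parts (c) and (d).} At $x=0$ the ground state is the constant $\psi_{1,0}=|\B_1|^{-1/2}$, so using $|\partial\B_1|=\omega_{n-1}$ and $|\B_1|=\omega_{n-1}/n$ the Feynman--Hellmann formula gives
\begin{equation*}
\frac{d}{dx}E_1(B_{1,x})\Big|_{x=0}=-\int_{\partial\B_1}\psi_{1,0}^2\,d\sigma=-n.
\end{equation*}
Hence $E_1(B_{1,x})+nx=O(x^2)$ near $0$, so $\varphi$ is bounded near $0$; on any compact subinterval of $(0,\infty)$ it is bounded by the continuity from (b); and for $x\to+\infty$ the constant test function yields the upper bound $E_1(B_{1,x})\le -nx$, while Corollary~\ref{cor-gris} applied to the (smooth, hence Lipschitz) ball gives the lower bound $E_1(B_{1,x})\ge -Kx^2$, so $\varphi$ also stays bounded at infinity. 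This proves (c). For (d), the boundary trace term $x\int_{\partial\B_1}f^2\,d\sigma$ is relatively form-bounded with respect to the Neumann form and its relative bound tends to $0$ as $x\to 0$, so $B_{1,x}$ converges to the Neumann Laplacian in the norm-resolvent sense; every discrete eigenvalue below the essential spectrum converges to its Neumann counterpart, and in particular $E_2(B_{1,x})\to E_2^N$.

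\textbf{Part (e) and main obstacle.} Differentiating the scaling identity from (a) in $r$ and changing variables yields
\begin{equation*}
\int_{\B_\eps}\bigl|\partial_r\psi_{\eps,r}(y)\bigr|^2\,dy=\eps^{2}\int_{\B_1}\bigl|(\partial_x\psi_{1,x})(z)\bigr|^2\Big|_{x=\eps r}\,dz,
\end{equation*}
and by (b) the map $x\mapsto\partial_x\psi_{1,x}\in L^2(\B_1)$ is continuous, so its $L^2$-norm is uniformly bounded on the compact set $[0,\eps_0 r_0]$; this gives \eqref{eq-kp}. The only non-routine step will be the global boundedness of $\varphi$ in (c): the small-$x$ side is an elementary Taylor expansion from (b), but the large-$x$ side genuinely uses the combination of the elementary constant-function test with the universal Lipschitz Robin estimate of Corollary~\ref{cor-gris}. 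Once (a) and (b) are settled, everything else reduces to routine dilation and perturbation manipulations.
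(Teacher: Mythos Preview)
Your argument is correct, and for parts (a)--(d) it essentially coincides with the paper's proof (type-(B) analytic family, Feynman--Hellmann at $x=0$; for the boundedness of $\varphi$ at infinity the paper cites \cite{luzhu} for the precise asymptotics $E_1(B_{1,x})=-x^2+o(x^2)$, but your elementary two-sided bound via the constant test function and Corollary~\ref{cor-gris} is enough). The genuine difference is in part (e): the paper carries out a lengthy explicit computation, writing $\psi_{\eps,r}$ in terms of modified Bessel functions $I_\nu$, computing the normalisation constant $\beta(\lambda,\eps)$ and its derivative from the power series of $I_\nu$, and assembling \eqref{eq-kp} from several separate asymptotic estimates (the case $n=1$ is even outsourced to \cite{kp}). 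Your scaling reduction
\[
\int_{\B_\eps}|\partial_r\psi_{\eps,r}|^2\,dy=\eps^2\,\big\|\partial_x\psi_{1,x}\big\|^2_{L^2(\B_1)}\Big|_{x=\eps r}
\]
replaces all of this by a one-line consequence of the eigenfunction scaling from (a) together with the $C^\infty$ dependence already asserted in (b), and works uniformly in the dimension. This is shorter and more conceptual; the only thing the paper's explicit computation buys is quantitative control of the constants in the intermediate steps, which is not needed for the statement.
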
 

\begin{proof} The property (a) is easily obtained by dilations. From the compactness of the embedding $H^1(\B_1) \hookrightarrow L^2(\partial\B_1)$
it  follows that for any $\eta>0$ there exists $C_\eta$ such that 
\begin{equation}
\int_{\partial\B_1} |f|^2\, d\sigma \ \leq \ \eta \int_{\B_1} |\nabla f|^2\, dy + C_\eta \int_{\B_1} |f|^2\, dy
\end{equation}
holds true for all $f\in H^1(\B_1)$. Hence the mapping $x\mapsto B_{1,x}$
is a type (B) analytic family, which implies  (b) and (d). Moreover, since $B_{1,0}$ is the Neumann Laplacian on $\B_1$ whose first eigenvalue is simple and the associated eigenfunction is constant,
the analytic perturbation
theory gives
\begin{equation} \label{pt-1}
E_1(B_{1,x}) = -nx + \mathcal{O}(x^2), \qquad x\to 0. 
\end{equation} 
where we have used the fact that $|\partial\B_1| = n |\B_1|$. On the other hand, $E_1(B_{1,x}) = -x^2 +o(x^2)$ as $x\to +\infty$, see e.g. \cite{luzhu}. This together 
with \eqref{pt-1} gives (c).  

\smallskip

\noindent It remains to prove (e), which is done by rather direct computations. For the proof in the case $n=1$ we refer to
\cite[Lemma~4.7]{kp}. Consider now the case $n\geq 2$ and let 
$$
\lambda = \lambda(\eps,r):= \sqrt{-E_1(B_{\eps,r})}\ .
$$
By symmetry $\psi_{\eps,r}(y) = \phi_{\eps,r}(|y|)$, where $ \phi_{\eps,r}$ is a positive solution of 
\begin{equation}  \label{ev-eq}
-\partial_t^2 \phi_{\eps,r}(t) -\frac{n-1}{t}\,  \partial_t\phi_{\eps,r}(t) = -\lambda^2(\eps,r)\,  \phi_{\eps,r}(t),
\end{equation}
satisfying $\partial_t   \phi_{\eps,r}(t) |_{t=\eps} = r\,  \phi_{\eps,r}(\eps)$. Writing 
\begin{equation}
\phi_{\eps,r}(t) =   (\lambda\, t)^{-\nu}\,  v(\lambda\, t)  , \qquad \nu := \frac n2-1
\end{equation}
and $ s= \lambda t$, we find out that 
\begin{equation}
s^2 v''(s) +s\, v'(s) -(s^2+\nu^2)\, v(s) =0.
\end{equation}
By \cite[Sec.~9.6.1]{as} the solutions of the last equation are given by the modified Bessel functions
$I_\nu$ and $K_\nu$. Since $s^{-\nu} K_\nu(s) \notin H^1(\B_\eps)$, see \cite[Eqs.~(9.6.8-9) \&~(9.6.28)]{as}, it follows that
\begin{equation} \label{ef-eq1}
\phi_{\eps,r}(t) = \beta(\lambda,\eps)\, u_{\eps,r}(\lambda\, t) := \beta(\lambda,\eps)\,  (\lambda\, t)^{-\nu}\, I_\nu(\lambda\, t),
\end{equation}
where $\beta(\lambda,\eps)$ is chosen so that $\|\psi_{\eps,r}\|_{L^2(\B_\eps)}=1$. The latter condition implies 
\begin{equation} \label{eq-beta}
\omega_n\, \beta^2(\lambda,\eps) \int_0^{\eps\lambda} I^2_\nu(s)\, s\, ds =  \lambda^n\, .
\end{equation} 
To prove estimate \eqref{eq-kp} we use the identity
\begin{align} \label{eq-aux1}
\partial_r \psi_{\eps,r}(y) & = \partial_\lambda \phi_{\eps,r}(|y|) \, \frac{\partial\lambda(\eps,r)}{\partial r} \nonumber \\
& = -\frac{1}{2 \lambda(\eps,r)}\, \frac{\partial E_1(B_{\eps,r})}{\partial r}  \Big(\partial_\lambda \beta(\lambda,\eps) \, u_{\eps,r}(\lambda\, |y|)+ 
|y|\, \beta(\lambda,\eps)\,  u'_{\eps,r}(\lambda\, |y|) \Big), 
\end{align} 
where $ u'_{\eps,r}(s) = \partial_s  u_{\eps,r}(s)$. Differentiating equation \eqref{eq-beta} with respect to $\lambda$ gives 
\begin{align} \label{eq-beta'}
n \lambda^{n-1} - \omega_n\, \beta^2(\lambda,\eps)\, \eps^2 \lambda\, I^2_\nu(\eps\lambda) & = 2\, \omega_n\, \beta(\lambda,\eps)\, \partial_\lambda \beta(\lambda,\eps) 
\int_0^{\eps\lambda} I^2_\nu(s)\, s\, ds .
\end{align}
By \cite[Eq.~(9.6.10)]{as} we have
\begin{equation}  \label{eq-I}
I_\nu(s) = 2^{-\nu}\, s^\nu \, \sum_{k=0}^\infty \frac{4^{-k}\, s^{2k}}{k!\,  \Gamma(k+\nu+1)} \, .
\end{equation}
Keeping in mind that $2\nu = n-2$, it follows that 
\begin{equation} 
I^2_\nu(s) = a^2_\nu\, s^{n-2} +2a_\nu b_\nu\, s^n + o(s^n), \qquad s\to 0
\end{equation} 
holds true with
$$
 a_\nu=\frac{1}{2^\nu\, \Gamma(\nu+1)}, \qquad b_\nu= \frac{1}{2^{2+\nu}\, \Gamma(\nu+2)}\, .
$$
This together with \eqref{eq-beta} implies 
\begin{align*}
\omega_n\, \beta^2(\lambda,\eps)\, \eps^2 \lambda\, I^2_\nu(\eps\lambda) & =
\frac{\eps^2\, \lambda^{n+1}\, I^2_\nu(\eps\lambda)}{\displaystyle\int_0^{\eps\lambda} I^2_\nu(s)\, s\, ds}=
 \frac{\lambda^{n+1}\, \eps^2\, a_\nu^2\, (\eps\lambda)^{n-2} \big(1+c_\nu\, \eps^2\lambda^2 +o (\eps^2\lambda^2)\big)}{\frac 1n\  \eps^n\lambda^n\, a_\nu^2 \Big(1+\frac{n}{n+2}\  c_\nu\, \eps^2\lambda^2 +o (\eps^2\lambda^2)\Big)}\\
 & = n\, \lambda^{n-1}\Big(1+ \frac{2}{n+2}\ c_\nu \, \eps^2\lambda^2 +o(\eps^2\lambda^2) \Big),  \quad \eps\to 0,
\end{align*}
where 
$$
c_\nu = \frac{2 b_\nu}{a_\nu} = \frac{\Gamma(\nu+1)}{2\, \Gamma(\nu+2)} = \frac{1}{2(\nu+1)}\, .
$$
Here we have used the identity $\Gamma(z+1) =z\, \Gamma(z)$.  
Hence
$$
n \lambda^{n-1} - \omega_n\, \beta^2(\lambda,\eps)\, \eps^2 \lambda\, I^2_\nu(\eps\lambda)  = -\frac{n\, \lambda^{n+1}\, \eps^2}{(n+2)(\nu+1)} + 
o( \lambda^{n+1}\, \eps^2)
 \qquad \eps\to 0. 
$$
Using \eqref{eq-beta} and  \eqref{eq-I} again one easily verifies that 
\begin{align*}
\omega_n\, \beta (\lambda,\eps)\, \int_0^{\eps\lambda} I^2_\nu(s)\, s\, ds & = \frac{\sqrt{\omega_n}}{2^\nu \sqrt{n}}\ \eps^{\frac n2} \lambda^n + 
o\big(\eps^{\frac n2} \lambda^n\big),   \qquad \eps\to 0. 
\end{align*}
Equation \eqref{eq-beta'} thus gives 
\begin{equation} \label{beta'-uppb}
 \partial_\lambda \beta(\lambda,\eps)  = -\frac{n^{3/2}\, 2^{\nu-1}\, \lambda\, \eps^{2-\frac n2}}{\sqrt{\omega_n}\, (n+2)(\nu+1)}  \, +o( \lambda\, \eps^{2-\frac n2}),  \qquad \eps\to 0.
\end{equation}
Since 
$$
u_{\eps,r}'(s) = 2^{-\nu}\,  \, \sum_{k=1}^\infty \frac{4^{-k}\, 2k\, s^{2k-1}}{k!\,  \Gamma(k+\nu+1)}\ ,
$$
see \eqref{ef-eq1} and \eqref{eq-I}, the above estimates and a simple calculation show that the upper bound
\begin{equation} \label{eq-aux2}
\int_{\B_\eps} \Big(\partial_\lambda \beta(\lambda,\eps) \, u_{\eps,r}(\lambda\, |y|)+ 
|y|\, \beta(\lambda,\eps)\,  u'_{\eps,r}(\lambda\, |y|) \Big)^2\, dy \ \leq \ C_1\,  \lambda^2\, \eps^4
\end{equation} 
holds for all $\eps\in (0,\eps_0)$ and $\forall\, r \in (0,r_0)$ and with a constant $C_1$ depending only on $n, r_0$ and $\eps_0$. 
On the other hand, part (a) of the Lemma in combination with \eqref{pt-1} implies 
$$
\Big | \, \frac{\partial E_1(B_{\eps,r})}{\partial r} \, \Big|  \ \leq \ C_2 \, \eps^{-1} \qquad  \forall\, \eps\in (0,\eps_0), \ \ \forall\, r \in (0,r_0). 
$$
In view of \eqref{eq-aux1} and \eqref{eq-aux2} this proves estimate \eqref{eq-kp} for $n\geq 2$. 
\end{proof}

\section{\bf Model peak operator}\label{sec-teps}

\subsection{Problem setting} Throughout this section, we keep fixed a value of $a>0$.
Our goal is to study the properties of the operator $\Tilde S_1^{\eps,a}$ for $\eps\to 0$. In order to simplify the notation we denote 
\begin{equation}
  \label{eq-tea}
T_{\eps,a} := \Tilde S_1^{\eps,a}\, .
\end{equation}
Then $T_{\eps,a} $ is the self-adjoint operator in $L^2(V_{\eps,a})$ generated by the quadratic form
\[
t_{\eps,a}(u,u)=\Tilde s\,_1^{\eps,a}(u,u)= \int_{V_{\eps,a}} |\nabla u|^2dx- \int_{\partial_0 V_{\eps,a}} u^2\, d\sigma, \quad \cD(t_{\eps,a})=\Tilde H^1_0(V_{\eps,a}),
\]
see section \ref{sec-prelim} for the notation. 
We start with a technical result. Denote
\[
\cD_0(t_{\eps,a})=\big\{ u\in C^\infty(\overline{V_{\eps,a}}): \text{ $\exists\, b,c\in (0,a)$ such that $u(x)=0$ for $x_1<b$ and for $x_1>c$}\big\}.
\]

\begin{lemma}\label{lem-dens}
The subspace $\cD_0(t_{\eps,a})$ is dense in $\Tilde H^1_0(V_{\eps,a})$ in the norm of $H^1(V_{\eps,a})$.
\end{lemma}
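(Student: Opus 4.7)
My approach is a three-step truncation--cutoff--mollification scheme, adapted to handle the peak at $x_1 = 0$ and the Dirichlet condition at $x_1 = a$. Given $u \in \Tilde H^1_0(V_{\eps,a})$ and $\sigma>0$, I would construct $v \in \cD_0(t_{\eps,a})$ with $\|u - v\|_{H^1(V_{\eps,a})} < \sigma$ as follows. First, replace $u$ by its bounded truncation $u_k := (-k)\vee (u \wedge k)$. Pointwise truncation is continuous on $H^1$ and commutes with the trace operator, so $u_k \in \Tilde H^1_0(V_{\eps,a})$ and $u_k \to u$ in $H^1$. The boundedness $\|u_k\|_\infty \le k$ will play an essential role later.

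Second, introduce smooth $x_1$-dependent cutoffs $\chi_b,\eta_c$ with $\chi_b=0$ on $[0,b]$, $\chi_b=1$ on $[2b,a]$, $\eta_c=0$ on $[a-c,a]$, $\eta_c=1$ on $[0,a-2c]$, and $|\chi_b'|\le C/b$, $|\eta_c'|\le C/c$. I claim $w:=\chi_b\eta_c u_k \to u_k$ in $H^1$ as $b,c\to 0$. Convergence near $x_1=a$ is standard: the boundary of $V_{\eps,a}$ there is Lipschitz and $u_k(a,\cdot)=0$, so a Hardy inequality on the Lipschitz part yields $c^{-2}\int_{\{a-2c<x_1<a-c\}}|u_k|^2\,dx \to 0$. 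The crux is the cutoff at the peak tip, where the decisive estimate is
\[
\|\chi_b'\,u_k\|_{L^2}^2 \,\le\, \frac{C}{b^2}\,\|u_k\|_\infty^2\,\bigl|\{b<x_1<2b\}\cap V_{\eps,a}\bigr| \,\le\, C\,k^2\,\eps^n\,b^{np-1},
\]
which tends to zero since $p>1$ and $n\ge 1$ imply $np>1$. The boundedness of $u_k$ from Step 1 is essential here.

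Third, on the Lipschitz (in fact $C^\infty$) subdomain $W:=V_{\eps,a}\cap\{b/2<x_1<a-c/2\}$, the function $w$ lies in $H^1(W)$, and the Meyers--Serrin theorem supplies $\tilde w\in C^\infty(\overline W)$ with $\|\tilde w-w\|_{H^1(W)}$ arbitrarily small. Multiplying $\tilde w$ by a further smooth cutoff $\psi(x_1)$ equal to $1$ on $[b,a-c]$ and vanishing near $x_1=b/2$ and $x_1=a-c/2$ preserves the $H^1$-closeness (because $\psi w=w$) and forces vanishing near both endpoints; extending the product by zero outside $W$ yields a candidate $v \in \cD_0(t_{\eps,a})$. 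A diagonal choice of parameters ($k\to\infty$, then $c,b\to 0$, then shrinking the Meyers--Serrin error) produces the desired approximation.

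The principal obstacle is Step 2 at the peak tip: for a generic $u\in H^1(V_{\eps,a})$ the slice integral $\int_{\{b<x_1<2b\}}|u|^2\,dx$ need not be $o(b^2)$, so the cutoff argument against the raw function $u$ would fail. Truncation in Step 1 converts the troublesome term into the quantitative bound $O(k^2 b^{np-1})$, and it is precisely the inequality $np>1$ built into Assumption~\ref{def-1peak} (via $p>1$, $n\ge 1$) that closes the argument.
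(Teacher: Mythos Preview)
Your proposal is correct and follows essentially the same three-step scheme as the paper's proof: truncation to $L^\infty$, cutoff in $x_1$ near both endpoints (using the volume bound $|\{x_1<b\}\cap V_{\eps,a}|=O(b^{np+1})$ and the condition $np>1$ at the peak, and a one-dimensional Hardy/Poincar\'e inequality exploiting $u_k(a,\cdot)=0$ at the Dirichlet end), then mollification on the remaining Lipschitz subdomain. The only cosmetic differences are that you use two independent parameters $b,c$ where the paper uses a single $\delta$, and that you spell out the Meyers--Serrin step with an additional cutoff $\psi$ while the paper simply invokes ``the standard mollifying procedure''.
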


\begin{proof} We provide a quite standard proof for sake of completeness.
First remark that the subspace $\cD^\infty:=\Tilde H^1_0(V_{\eps,a})\cap L^\infty(V_{\eps,a})$ is dense 
in $\Tilde H^1_0(V_{\eps,a})$ in the norm of $H^1(V_{\eps,a})$. Indeed, for $u\in \Tilde H^1_0(V_{\eps,a})$ and $k>0$
set $u_k:=\min\big\{\max\{u,-k\},k\big\}$, then $u_k\in L^\infty(V_{\eps,a})$ and it is standard to check that
$u_k\in \Tilde H^1_0(V_{\eps,a})$ and that $u_k$ converges to $u$ in $H^1(V_{\eps,a})$ as $k\to+\infty$.
Therefore, it is sufficient to check that any function from $\cD^\infty$ is the limit in $H^1(V_{\eps,a})$
of functions from $\cD_0(t_{\eps,a})$.

Let $u\in \cD^\infty$. Let $\chi:\R\to \R$ be an increasing $C^\infty$-function with $\chi(s)=0$ for $s<\frac{1}{2}$ and $\chi(s)=1$ for $s>1$.
For $\delta>0$ we denote by $u_\delta$ the function on $V_{\eps,a}$ given by
\[
u_\delta(x)=u(x)\,  \chi\Big( \frac{x_1}{\delta}\Big)\, \chi\Big( \frac{a-x_1}{\delta}\Big)\, .
\]
Then for some $C>0$ and small $\delta>0$ one has
\begin{multline*}
\|u-u_\delta\|^2_{H^1(V_{\eps,a})}  \leq C \int_{V_{\eps,a}} \big( |u|^2 +|\nabla u|^2)\, \Big(1-\chi\Big( \frac{x_1}{\delta}\Big)\, \chi\Big( \frac{a-x_1}{\delta}\Big)\Big)^2\, dx\\
+ \frac{C}{\delta^{2}} \int_{V_{\eps,a}: \, x_1< \delta} |u|^2 \, dx
+ \frac{C}{\delta^{2}} \int_{V_{\eps,a}: \, x_1>a-\delta} |u|^2 \, dx=:I_1+I_2+I_3.
\end{multline*}
By the monotone convergence theorem the term $I_1$ tends to zero as $\delta\to 0$. To estimate $I_2$ we remark that
\[
\int_{V_{\eps,a}: \, x_1< \delta} |u|^2 \, dx \le \|u\|^2_\infty \int_{V_{\eps,a}: \, x_1< \delta} 1 \, dx
=\|u\|^2_\infty \int_0^\delta \int_{\B_{\eps x_1^p}} dx'\, dx_1=O(\delta^{np+1}),
\]
and due to $np+1>2$ we see that $I_2$ tends to zero as $\delta\to 0$.
To estimate $I_3$ we first remark that almost everywhere one has
\[
u(x_1,x')=\int_a^{x_1} \partial_{x_1} u(t,x')dt,
\]
hence, using H\"older's inequality,
\[
\big|u(x_1,x')\big|^2\le (a-x_1)\int_{x_1}^a \big(\partial_{x_1} u(t,x')dx_1\big)^2 dt \le (a-x_1) \int_{x_1}^a \big|\nabla u(t,x')\big|^2 dt,
\]
and then
\begin{align*}
I_3&\le \dfrac{C}{\delta^2} \int_{V_{\eps,a}: \, x_1>a-\delta} (a-x_1) \int_{x_1}^a \big|\nabla u(t,x')\big|^2 dt\,dx\\
&= \dfrac{C}{\delta^2} \int_{a-\delta}^a (a-x_1) \int_{x_1}^a \int_{-\eps x_1^p}^{\eps x_1^p}  \big|\nabla u(t,x')\big|^2 dx'\, dt \, dx_1\\
& \le \dfrac{C}{\delta^2} \int_{a-\delta}^a (a-x_1) \int_{x_1}^a \int_{-\eps t^p}^{\eps t^p}  \big|\nabla u(t,x')\big|^2 dx'\, dt \, dx_1\\
&\le \dfrac{C}{\delta^2} \int_{a-\delta}^a (a-x_1)dx_1 \int_{a-\delta}^a \int_{-\eps t^p}^{\eps t^p}  \big|\nabla u(t,x')\big|^2 dx'\, dt\\
&=\dfrac{C}{2 } \int_{V_{\eps,a}: \, x_1>a-\delta} |\nabla u|^2dx,
\end{align*}
and the last term tends to $0$ for $\delta\to 0$ due to $|\nabla u|^2\in L^1(V_{\eps,a})$. Therefore, $u_\delta$ converges to $u$ in $H^1(V_{\eps,a})$.

As $u_\delta=0$ for $x_1<\delta/2$ and $x_1>a-\delta/2$, the preceding constructions show that the set 
\[
\cD_1(t_{\eps,a})=\big\{ u\in H^1(V_{\eps,a}): \text{ $\exists\,b,c\in (0,a)$ such that $u(x)=0$ for $x_1<b$ and for $x_1>c$}\big\}
\]
is dense in $\Tilde H^1_0(V_{\eps,a})$ in the norm of $H^1(V_{\eps,a})$. On the other hand, in the same norm $\cD_0(t_{\eps,a})$ is dense in $\cD_1(t_{\eps,a})$ using
the standard mollifying procedure.
\end{proof}

\noindent In view of Lemma~\ref{lem-dens} it follows by the min-max principle that for any $j\in\N$ one has
\begin{equation} \label{ej-vp0}
E_j(T_{\eps,a}) = \inf_{\substack{S\subset \cD_0(t_{\eps,a}) \\ {\rm dim\, } S =j} } \, \sup_{\substack{u\in S\\ u\neq 0}}\,  \frac{t_{\eps,a}(u,u)}{\quad \|u\|_{L^2(V_{\eps,a})}^2}\,.
\end{equation}

\subsection{Change of variables} \label{sec-subs0}
Let $(s,t)=(s,t_1,t_2,\dots,t_n) \in \Pi_{\eps}$, where 
\begin{equation} 
\Pi_{\eps} = (0,a)\times \B_\eps\, , \qquad \B_\eps\subset \R^{n}\, . 
\end{equation} 
Then $V_{\eps,a} = X (\Pi_\eps)$ for $X(s,t) = (s,t s^p)$, and the transform 
\begin{equation} 
u \mapsto \U\, u, \quad \U\, u(s,t) := u\big(X(s,t)\big)
\end{equation} 
maps $L^2(V_{\eps,a})$ unitarily on $L^2(\Pi_\eps, s^{np}\, ds\,dt)$.
We are going to study the quadratic form $q_\eps$ in $L^2(\Pi_\eps, s^{np}\, ds\,dt)$
given by 
$$
q_\eps(u,u) := t_{\eps,a} (\U^{-1} u, \U^{-1} u)
$$ 
with the domain $\cD(q_\eps)=\U \cD(t_{\eps,a})$. For this purpose, denote
\begin{multline}
\cD_0(q_\varepsilon):=\U\, \cD_0(t_{\eps,a})\\
\equiv  \big\{u\in C^\infty(\overline{\Pi_\eps}):\, \exists\, b,c\in (0,a) \text{ such that } u(s,t)=0 \text{ for } s<b \text{ and  for }s>c\big\},
\end{multline}
which is a core of $q_\eps$ by construction. Hence in view of \eqref{ej-vp0} one has
\begin{equation} \label{ej-vp2}
E_j(T_{\eps,a}) = \inf_{\substack{S\subset \cD_0(q_\eps) \\ {\rm dim\, } S =j} } \, \sup_{\substack{u\in S\\ u\neq 0}}\,  \frac{q_\eps(u,u)}{\quad \|u\|_{L^2(\Pi_\eps, s^{np}\, ds\,dt)}^2} \, ,
\end{equation} 
and a standard calculation then shows that for $u\in \cD_0(q_\varepsilon)$ there holds
\begin{align} \label{qeps}
q_\eps(u,u) & = \int_0^a\!\!\! \int_{\B_\eps}  \big\langle \nabla u,{\rm G}\,  \nabla u \big\rangle_{\R^N}\,  s^{pn}  dt ds 
 - \int_0^a\!\! \sqrt{1+p^2 \varepsilon^2 s^{2p-2}}  \int_{\partial\B_\eps} u^2   s^{p(n-1)} d\tau ds, 
\end{align}
where $d\tau$ denotes  the $(n-1)$-dimensional Hausdorff measure, and ${\rm G}$ is an $N\times N$ matrix given by
\[
{\rm G}= \left(\, 
 \begin{matrix}
1+p^2 |t|^2 s^{2p-2} &  p s^{2p-1}\,  t  \\
& \\
p s^{2p-1}\,t^T  &  s^{2p}\, \id
\end{matrix}\, \right)^{-1} .
\]
Here $\id$ stands for the $n\times n$ identity matrix. One checks directly that
\begin{equation} \label{eq-m}
{\rm G}= \left(\, 
 \begin{matrix}
1&  -\frac ps\,  t  \\
& \\
-\frac ps\,t^T  &  C
\end{matrix}\, \right)\, 
\quad \text{with} 
\quad 
C_{jk} = \begin{cases}
s^{-2p}+p^2\, t_j^2\, s^{-2} &  \text{ if} \quad j=k,\\
p^2 s^{-2}\, t_j t_k &   \text{ if} \quad  j\neq k.
\end{cases}
\end{equation}
Using the Young inequality and equation \eqref{eq-m} we find that for $\eps$ small enough
\begin{equation} \label{m-est}
\begin{aligned} 
& \big(1-n p \eps\big)\, |\partial_s u|^2 + \Big (\frac{1}{s^{2p}} -\frac{n \eps^2 p^2+\eps p}{s^2}\Big)\, |\nabla_t u|^2  \leq 
 \\
& \qquad \big\langle \nabla u,{\rm G}\,  \nabla u \big\rangle_{\R^N}  \leq  \big(1+n p \eps\big)\, |\partial_s u|^2 + \Big(\frac{1}{s^{2p}} +\frac{n \eps^2 p^2+\eps p}{s^2}\Big)\, |\nabla_t u|^2 \, .
\end{aligned}
\end{equation}

\smallskip

\noindent In what follows we will also need the transform
\begin{equation} \label{map-v}
u \mapsto (\V\, u)(s,t) = s^{-\frac{pn}{2}}\,  u(s,t),
\end{equation}
which maps $L^2(\Pi_\eps)$ unitarily onto $L^2(\Pi_\eps, s^{np} ds dt)$.

\subsection{Upper bound} \label{sec-upperb}
We start with a comparison between $T_{\eps,a}$ and the one-dimensional operator $L_{\eps,a}$.

\begin{lemma}\label{lem6}
There exist $c>0$, $c'>0$, $\eps_0>0$  such that
\begin{equation}
E_j(T_{\eps,a})\le (1+c\eps)E_j(L_{(1+c\eps)\eps,a}) + c'  \qquad  \forall \, j\in \N, \ \forall\, \eps\in(0,\eps_0).
\end{equation}
\end{lemma}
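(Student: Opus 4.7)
The approach is a min-max argument with separable test functions. Thanks to Lemma~\ref{lem-dens} and~\eqref{ej-vp2}, it suffices to exhibit a $j$-dimensional subspace of $\cD_0(q_\eps)$ on which $q_\eps(u,u)/\|u\|^2 \leq (1+c\eps) E_j(L_{(1+c\eps)\eps,a}) + c'$. Letting $h_1,\dots,h_j$ be an orthonormal basis of the spectral subspace of $L_{(1+c\eps)\eps,a}$ below $E_j$ (which we may take in $C^\infty_0(0,a)$ by approximation in the form domain), the candidate test functions are
\[
u_i(s,t) \;=\; s^{-pn/2}\, h_i(s)\,\psi_{\eps,s^p}(t),
\]
where $\psi_{\eps,r}$ is the positive normalized ground state of the Robin ball operator $B_{\eps,r}$ from Section~\ref{sec-1d}. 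The normalization $\|\psi_{\eps,r}\|_{L^2(\B_\eps)}=1$ makes the family $\{u_i\}$ orthonormal in $L^2(\Pi_\eps, s^{pn}dsdt)$.

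To estimate $q_\eps(u,u)$ from above one applies the upper bound in \eqref{m-est} for the metric ${\rm G}$ and replaces $\sqrt{1+p^2\eps^2 s^{2p-2}}$ by its lower value $1$. Differentiating $\psi_{\eps,s^p}$ in $s$ produces a cross-term $ff'\int\psi\,\partial_s\psi\,dt$ that vanishes by $\partial_s\|\psi_{\eps,s^p}\|^2_{L^2(\B_\eps)}=0$, and a quadratic remainder bounded via Lemma~\ref{lem-1}(e) by $f^2\int(\partial_s\psi)^2dt\leq K p^2\eps^2 s^{2p-2} f^2$. The structural step is the identity
\[
\int_{\B_\eps}|\nabla_t\psi_{\eps,s^p}|^2\,dt \;=\; E_1(B_{\eps,s^p}) + s^p\int_{\partial\B_\eps}\psi_{\eps,s^p}^2\,d\tau,
\]
which after $t$-integration yields an exact cancellation of the piece $s^{p(n-1)}\int\psi^2 d\tau$ against the leading Robin boundary integral, isolating only $s^{p(n-2)} E_1(B_{\eps,s^p})$. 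Inserting Lemma~\ref{lem-1}(c), $E_1(B_{\eps,s^p}) = -ns^p/\eps + s^{2p}\varphi(\eps s^p)$ with $\varphi\in L^\infty$, produces the attractive contribution of $L_{\eps,a}$ up to a bounded remainder absorbable into $c'$.

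For the longitudinal part, integration by parts on the cross-term generated by $f = s^{-pn/2}h$ transforms $\int(f')^2 s^{pn}ds$ into $\int(h')^2 ds + \tfrac{(np-1)^2-1}{4}\int h^2/s^2\,ds$, matching the Hardy-type potential of $L_{\eps,a}$ with the correct coefficient. Collecting contributions gives
\[
q_\eps(u,u) \;\le\; (1+np\eps)\,\big\langle h, L_{\eps,a}\, h\big\rangle_{L^2(0,a)} + c'\|h\|^2_{L^2(0,a)}.
\]
Rewriting this in the announced form uses two observations: first, $(1+c\eps)\,\tfrac{n}{(1+c\eps)\eps} = n/\eps$, so the singular attractive potential matches exactly between $L_{\eps,a}$ and $(1+c\eps)L_{(1+c\eps)\eps,a}$; second, for any $c\geq np$ one has $(1+np\eps)A\leq (1+c\eps)A$ where $A:=\int(h')^2 + \tfrac{(np-1)^2-1}{4}\int h^2/s^2\ge 0$, the positivity being a consequence of the Hardy inequality~\eqref{hardy} and the fact that $\tfrac{(np-1)^2-1}{4} > -\tfrac14$ (since $np>1$). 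The min-max principle and orthonormality of $\{u_i\}$ then give the claimed eigenvalue inequality.

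The main technical point is the apparent singular error $(np^2\eps^2+p\eps)/s^2$ coming from \eqref{m-est} when applied to $|\nabla_t u|^2$; naively it would produce a $1/s^2$ contribution swamping the eigenvalue asymptotics. It is saved by the same cancellation used above: although $E_1(B_{\eps,s^p})$ and $s^p\int_{\partial\B_\eps}\psi^2 d\tau$ are each of order $1/\eps$, their sum $\int_{\B_\eps}|\nabla_t\psi_{\eps,s^p}|^2 dt$ is only $O(s^{2p})$, so the error factor contributes merely $O(\eps\, s^{2p-2})$, which is bounded on $(0,a)$ because $p>1$ and thus absorbable into $c'$.
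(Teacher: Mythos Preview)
Your plan is sound and follows the paper's strategy: separable test functions $u=s^{-pn/2}h(s)\psi(t)$ with $\psi$ the transverse Robin ground state, the integration-by-parts identity \eqref{per-partes} for the longitudinal part, and Lemma~\ref{lem-1} for the transverse eigenvalue. Two points deserve correction or comment.

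\textbf{The displayed inequality with $L_{\eps,a}$ is false as written.} What your computation actually yields is
\[
q_\eps(u,u)\;\le\;(1+np\eps)\,A\;-\;\int_0^a\frac{n}{\eps s^p}\,h^2\,ds\;+\;c'\|h\|^2,\qquad A:=\int_0^a\Big[(h')^2+\tfrac{(np-1)^2-1}{4s^2}h^2\Big]ds\ge 0,
\]
because the attractive term comes from the boundary integral and the leading $s^{-2p}$ gradient piece, neither of which carries the factor $(1+np\eps)$. By contrast $(1+np\eps)\langle h,L_{\eps,a}h\rangle$ multiplies that attractive term by $(1+np\eps)$, making it smaller by the non-integrable quantity $n^2p\int h^2/s^p$; so the line you display is stronger than what you have proved. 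Your two ``observations'' are in fact the correct argument and should replace that line: from the display above and $A\ge 0$ one reads off directly $q_\eps(u,u)\le(1+c\eps)\langle h,L_{(1+c\eps)\eps,a}h\rangle+c'\|h\|^2$ with $c=np$, since $(1+c\eps)\cdot\tfrac{n}{(1+c\eps)\eps}=\tfrac{n}{\eps}$.

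\textbf{The estimate $\int_{\B_\eps}|\nabla_t\psi_{\eps,s^p}|^2\,dt=O(s^{2p})$ needs one more ingredient.} Lemma~\ref{lem-1}(c) alone does not give it; after rescaling you need that $x\mapsto\int_{\B_1}|\nabla\psi_{1,x}|^2$ vanishes to second order at $x=0$, which follows from Lemma~\ref{lem-1}(b) and Feynman--Hellmann (the first-order term is $E_1'(B_{1,0})+\int_{\partial\B_1}\psi_{1,0}^2=-n+n=0$). The paper avoids this extra step by handling the metric error \emph{before} choosing the test function: since $s^{2p-2}\le a^{2p-2}$ one has $\tfrac{1}{s^{2p}}+\tfrac{p\eps+np^2\eps^2}{s^2}\le\tfrac{1+c\eps}{s^{2p}}$, so the transverse form becomes $(1+c\eps)\,b_{\eps,\rho(s,\eps)}$ with $\rho(s,\eps)=s^p/(1+c\eps)$, and choosing $\psi=\psi_{\eps,\rho(s,\eps)}$ makes the residual gradient term disappear identically. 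Your route works but is slightly less economical.
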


\begin{proof}
By equations \eqref{qeps} and \eqref{m-est}, for $u\in \cD_0(q_\eps)$ one has
\begin{align*}
q_\eps(u,u) \, \leq \, q_\eps^+(u,u) & :=  \int_0^a \int_{\B_\eps} \left( (1+np \eps) |\partial_s u|^2 +\left(\frac{1}{s^{2p}} + \frac{ p\eps+np^2 \eps^2}{s^2}\right)\, |\nabla_t u|^2 \right)\, s^{np} \, dt ds  \\
& \quad - \int_0^a  \int_{\partial\B_\eps}\, s^{p(n-1)}\, u^2\,  d\tau ds\, .
\end{align*}
A simple calculation then shows that
for $u\in \cD_0(r_\eps^+):=\V^{-1} \cD_0(q_\eps)\equiv \cD_0(q_\eps)$
there holds
\begin{align*}
r_\eps^+(u,u) & := q_\eps^+( \V\, u , \V\, u)  \\
&\ =   \int_0^a \int_{\B_\eps} \left( (1+np \eps) \Big(\partial_s u-\frac{n p u}{2s}\Big)^2 +\left(\frac{1}{s^{2p}} + \frac{p \eps+np^2 \eps^2}{s^2}\right)\, |\nabla_t u|^2 \right) \, dt ds \\
& \quad -\int_0^a  \frac{1}{s^p}\,   \int_{\partial\B_\eps} u^2\,  d\tau ds \, .
\end{align*}
Eq. \eqref{ej-vp2} implies then
\begin{equation}  \label{1-upperb0}
E_j(T_{\eps,a}) \le \inf_{\substack{S\subset \cD_0(r^+_\eps) \\ {\rm dim\, } S =j} } \, \sup_{\substack{u\in S\\ u\neq 0}}\,  \frac{r^+_\eps(u,u)}{\quad \|u\|_{L^2(\Pi_\eps)}^2} \, 
\end{equation} 
The integration by parts gives that for $u\in \cD_0(r^+_\eps)$ one has
\begin{equation} \label{per-partes}
\int_0^a u\,  \partial_s u\  \frac{ds}{s} = \int_0^a \frac{u^2}{2s^2}\, ds, 
\end{equation}
which implies that 
\begin{align*}
r_\eps^+(u,u) & =  \int_0^a \int_{\B_\eps} \left( (1+n p \eps) \Big(|\partial_s u|^2 +\frac{n^2p^2-2np}{4 s^2}\, u^2 \Big)+ \left(\frac{1}{s^{2p}} + \frac{p \eps+np^2 \eps^2}{s^2}\right)\, |\nabla_t u|^2 \right) dt ds \\
& \quad - \int_0^a  \frac{1}{s^p}\,   \int_{\partial\B_\eps} u^2\,  d\tau ds .
\end{align*}
Having in mind that due to \eqref{hardy}
\[
\int_0^a \Big(|\partial_s u|^2 +\frac{n^2 p^2-2n p}{4 s^2}\, u^2 \Big) ds\ge \int_0^a \Big(|\partial_s u|^2 -\frac{u^2}{4 s^2}\,  \Big) ds \ge  0,
\]
for $\eps$ small enough one can estimate, with a suitable $c>0$,
\begin{align*}
r_\eps^+(u,u) & \leq  (1+c\eps)  \int_0^a \int_{\B_\eps} \left( |\partial_s u|^2 -\frac{n^2p^2-2np}{4 s^2}\, u^2 +\frac{1}{s^{2p}}\, |\nabla_t u|^2 \right) dt ds  - \int_0^a \int_{\partial\B_\eps} u^2 \frac{d\tau ds}{s^p} \\
& = (1+c\eps)  \int_0^a \int_{\B_\eps} \left( |\partial_s u|^2 -\frac{n^2 p^2-2np}{4 s^2}\, u^2 \right)\, dt ds \\
& \quad + (1+c\eps) \int_0^a \frac{1}{s^{2p}} \left\{  \int_{\B_\eps} |\nabla_t u|^2 \, dt - \frac{s^p}{1+c\eps}\, \int_{\partial\B_\eps} u^2\,  d\tau \right\} ds\, .
\end{align*}
Note that the functional in the curly brackets is the quadratic form $b_{\eps,\rho(s,\eps)}$ as defined in section \ref{sec-1d}
with 
\[
\rho(s,\eps) =(1+c\eps)^{-1} s^p. 
\]
Denote by $\psi\equiv \psi_{\eps, \rho(s,\eps)}$  the positive normalized eigenfunction of $B_{\eps,\rho(s,\eps)}$ relative to the eigenvalue $E_1(B_{\eps,\rho(s,\eps)})$. 

\smallskip

\noindent Now let $S \subset C_0^\infty(0,a)$ be a linear subspace with dimension $j$ and define
\begin{equation} \label{eq-S}
\Tilde S = \big\{ u:\Pi_\eps\to \R:\, u(s,t)= f(s)\, \psi_{\eps,\rho(s,\eps)}(t), \, f\in S\big\} .
\end{equation} 
Then $ \dim \Tilde S=j$ and $\Tilde S\subset \cD_0(r_\eps^+)$ due to Lemma~\ref{lem-1}. Hence for $u\in \Tilde S$ one has 
\begin{gather*}
\|u\|_{L^2(\Pi_\eps)} = \|f\|_{L^2(0,a)},\quad
\int_{\B_\eps} |\nabla_t u|^2 \, dt - \frac{s^p}{1+c\eps}\, \int_{\partial\B_\eps} u^2\,  d\tau  = E_1(B_{\eps,\rho(s,\eps)}) f(s)^2\, .
\end{gather*}
Moreover,
\begin{align*} 
 \int_0^a\!\! \int_{\B_\eps}\!\! \Big( |\partial_s u|^2 +\frac{n^2 p^2-2np}{4 s^2}\, u^2 \Big) dt ds = \int_0^a\! \Big[  |f'|^2 + \Big(\frac{n^2p^2-2np}{4s^2} + \int_{\B_\eps} \!\! |\partial_s \psi_{\eps, \rho(s,\eps)}|^2 dt\Big) f^2 \Big] ds \, .
\end{align*}
Using \eqref{eq-kp} we see that there exists $K>0$ such that for $\eps$ small enough we have
\begin{multline} \label{eq-kp2} 
\int_{\B_\eps} \big|\partial_s \psi_{\eps,\rho(s,\eps)}(t)\big|^2\, dt
=\int_{\B_\eps} \Big( \partial_\rho \psi_{\eps,\rho}(t) \big|_{\rho= \rho(\eps,s)} \, \dfrac{\partial \rho(\eps,s)}{\partial s}\Big)^2 \, dt\\
=\dfrac{p^2 s^{2p-2}}{(1+c\eps)^2}\int_{\B_\eps} \Big( \partial_\rho \psi_{\eps,\rho}(t) \big|_{\rho= \rho(\eps,s)}\Big)^2 \, dt
\ \leq \ K\, \frac{p^2  s^{2p-2}}{(1+c\eps)^2}\ \eps^2\,  <\eps \quad \forall\,  s\in(0,a).
\end{multline}
Hence
\[
r_\eps^+(u,u)  \leq (1+c\eps)  \int_0^a \left[ |f'|^2 +\left(\frac{n^2p^2-2np}{4s^2} +\eps + \frac{E_1(B_{\eps,\rho(s,\eps)})}{s^{2 p}} \right) f^2  \right] ds\, .
\]
To continue we apply Lemma \ref{lem-1} which implies that there exists $c_0>0$, independent of $\eps$,  such that 
\begin{align*}
\frac{E_1(B_{\eps,\rho(s,\eps)})}{s^{2p}} & = \frac{E_1(B_{1,\eps \rho(s,\eps)})}{\eps^2 s^{2p}}\leq \frac{-n\, \eps\, \rho(s,\eps) +c_0\, \eps^2 \rho^2(s,\eps) }{\eps^2 s^{2p}} \\
& \leq\  -\frac{n}{(1+c\eps)\, \eps\, s^p} +\frac{c_0}{(1+c\eps)^2}
\end{align*}
This implies that the inequality
\begin{align*} \label{b+upperb}
\frac{r^+_\eps(u,u)}{ \quad \|u\|_{L^2(\Pi_\eps)}^2}  &\ \leq\   \frac{(1+c\eps) \displaystyle\int_0^a \left[ |f'|^2 +\left(\frac{n^2p^2-2np}{4s^2} -\frac{n}{(1+c\eps)\, \eps\, s^p} \right) f^2  \right] ds}{\|f\|_2^2} \\
&\qquad  + \frac{c_0}{1+c\eps} +\eps(1+c\eps)\\
&=(1+c\eps) \dfrac{\big\langle f,L_{(1+c\eps)\, \eps,a}f\big\rangle_{L^2(0,a)}}{\|f\|_2^2}+ \frac{c_0}{1+c\eps} +\eps(1+c\eps).
\end{align*}
holds for each $u\in\Tilde S$. Therefore,
\begin{multline*}
\inf_{\substack{S\subset \cD_0(r^+_\eps) \\ {\rm dim\, } S =j} } \, \sup_{\substack{u\in S\\ u\neq 0}}\,  \frac{r^+_\eps(u,u)}{\quad \|u\|_{L^2(\Pi_\eps)}^2}
\le
\inf_{\substack{S\subset C_0^\infty(0,a) \\ {\rm dim\, } S =j} } \, \sup_{\substack{u\in \Tilde S\\ u\neq 0}}\,  \frac{r^+_\eps(u,u)}{\quad \|u\|_{L^2(\Pi_\eps)}^2}\\
\le (1+c\eps) \inf_{\substack{S\subset C_0^\infty(0,a) \\ {\rm dim\, } S =j} } \, \sup_{\substack{f\in  S\\ f\neq 0}}\,\dfrac{\big\langle f,L_{(1+c\eps)\, \eps,a}f\big\rangle_{L^2(0,a)}}{\|f\|_2^2}
+ \frac{c_0}{1+c\eps} +\eps(1+c\eps)\\
=(1+c\eps)E_j(L_{(1+c\eps)\, \eps,a})+ \frac{c_0}{1+c\eps} +\eps(1+c\eps),
\end{multline*}
and the substitution into \eqref{1-upperb0} concludes the proof.
\end{proof}

\noindent A combination of Lemma~\ref{lem6} with Lemma~\ref{lem7} gives then the main result of this subsection:
\begin{prop}\label{prop7}
There exist $K>0$, $k>0$ and $\eps_0>0$ such that
\[
E_j(T_{\eps,a})\le (1-k\eps)\, \eps^{\frac{2}{2-p}}E_j(A_1) \quad
\forall\, \eps\in(0,\eps),\  1\leq j \leq \cN\big(A_1,-K\eps^{\frac{p}{p-2}}\big).
\]
\end{prop}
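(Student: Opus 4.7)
The claim follows by simply chaining Lemma~\ref{lem6} with Lemma~\ref{lem7}, doing a Taylor expansion to simplify the multiplicative prefactor, and then absorbing the bounded additive remainder into the main term by enlarging the threshold constant $K$. There is no new conceptual ingredient.

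More precisely, by Lemma~\ref{lem6} one has, for $\eps$ small and every $j\in\N$,
\[
E_j(T_{\eps,a})\le(1+c\eps)\,E_j\bigl(L_{(1+c\eps)\eps,a}\bigr)+c'.
\]
Setting $\eps':=(1+c\eps)\eps$, so that $\eps\le\eps'\le 2\eps$ for $\eps$ small, I would then invoke Lemma~\ref{lem7} applied to $L_{\eps',a}$, which gives
\[
E_j(L_{\eps',a})\le(\eps')^{-\frac{2}{2-p}}E_j(A_1)+k
\]
in the range $j\le\cN(A_1,-K'(\eps')^{p/(2-p)})$, with $K'$ the constant supplied by Lemma~\ref{lem7}. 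Monotonicity of the threshold in $\eps'$ shows that this range is implied by $j\le\cN(A_1,-K\eps^{p/(2-p)})$ once $K\ge 2^{p/(2-p)}K'$; this is the validity range I would fix in the statement.

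Combining the two bounds and using the elementary identity $(1+c\eps)\cdot\bigl((1+c\eps)\eps\bigr)^{-\frac{2}{2-p}}=(1+c\eps)^{-\frac{p}{2-p}}\,\eps^{-\frac{2}{2-p}}$, together with the first-order Taylor estimate $(1+c\eps)^{-p/(2-p)}\ge 1-k_1\eps$ valid for $\eps$ small, with $k_1:=cp/(2-p)$, I obtain
\[
E_j(T_{\eps,a})\le(1-k_1\eps)\,\eps^{-\frac{2}{2-p}}E_j(A_1)+C_0,
\]
where $C_0$ is a uniform upper bound for $(1+c\eps)k+c'$ on $(0,\eps_0)$. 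The direction of the inequality is correct because $\eps^{-2/(2-p)}E_j(A_1)<0$, so multiplying the lower bound $1-k_1\eps$ of the positive factor $(1+c\eps)^{-p/(2-p)}$ by this negative quantity reverses the inequality.

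The last step is to absorb $C_0$ into the multiplicative prefactor. Pick any $k>k_1$; the target bound $E_j(T_{\eps,a})\le(1-k\eps)\eps^{-\frac{2}{2-p}}E_j(A_1)$ is equivalent, after rearrangement and using $\eps\cdot\eps^{-2/(2-p)}=\eps^{-p/(2-p)}$, to
\[
C_0\le(k-k_1)\,\eps^{-\frac{p}{2-p}}\,|E_j(A_1)|.
\]
The hypothesis $|E_j(A_1)|\ge K\eps^{p/(2-p)}$ provided by $j\le\cN(A_1,-K\eps^{p/(2-p)})$ forces the right-hand side to be at least $(k-k_1)K$, so the estimate holds provided $K\ge C_0/(k-k_1)$. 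Fixing any such $k$ (say $k=2k_1$) and then enlarging $K$ accordingly finishes the proof. The real work is already done in Lemmas~\ref{lem6}--\ref{lem7}: the first rests on the Bessel-function analysis of Section~\ref{sec-1d} together with the fibered test-function construction built from $\psi_{\eps,\rho(s,\eps)}$, and the second on an IMS partition combined with a Hardy-type control of the tail operator $M_{\eps,a/4}$. The only delicate point at the level of Proposition~\ref{prop7} is checking that the small rescaling $\eps\mapsto(1+c\eps)\eps$ does not spoil either the leading asymptotics or the threshold, which is exactly what the Taylor expansion and the choice $K\ge C_0/(k-k_1)$ accomplish.
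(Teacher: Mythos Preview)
Your proof is correct and follows exactly the paper's approach, which is simply the one-line remark that the claim results from combining Lemma~\ref{lem6} with Lemma~\ref{lem7}; you have merely spelled out the bookkeeping (the rescaling $\eps\mapsto(1+c\eps)\eps$, the Taylor expansion of $(1+c\eps)^{-p/(2-p)}$, and the absorption of the additive constant into the threshold $K$) that the paper leaves to the reader. Note also that you have silently corrected the sign typos in the statement: the intended exponent is $\eps^{2/(p-2)}=\eps^{-2/(2-p)}$ in the main term and $\eps^{p/(2-p)}$ in the threshold, consistent with Lemma~\ref{lem7} and Proposition~\ref{prop-main}.
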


\smallskip

\subsection{Lower bound} 

\begin{lemma}\label{lem9}
There exist $\eps_0>0$, $b>0$ and $B>0$ such that for all $j\in\big\{ 1,\dots,\cN (T_{\eps,a},-B\big)\}$
one has
\begin{equation}
   \label{eq-low3}
E_j(T_{\eps,a})\ge (1-b\eps)E_j(L_{(1-b\eps)^2\eps,a})-B.
\end{equation}
\end{lemma}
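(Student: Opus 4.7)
The plan is to run the argument of Lemma~\ref{lem6} with the opposite inequality, using the \emph{lower} bound in~\eqref{m-est} together with an orthogonal decomposition in the fibre $\B_\eps$ so as to exploit the spectral gap of the cross-sectional Robin operator $B_{\eps,\rho}$. Starting from~\eqref{qeps}, applying the lower bound of~\eqref{m-est}, transforming by $\V$ (see~\eqref{map-v}), and integrating by parts as in~\eqref{per-partes}, one arrives at a form $r_\eps^-$ on $L^2(\Pi_\eps)$ of the shape
\[
r_\eps^-(u,u)= \int_0^a\!\!\int_{\B_\eps}\!\!(1-np\eps)\Big(|\partial_s u|^2+\tfrac{n^2p^2-2np}{4s^2}u^2\Big)\,dt\,ds
+\int_0^a\!\! w(s,\eps)\,b_{\eps,\tilde\rho(s,\eps)}(u(s,\cdot),u(s,\cdot))\,ds,
\]
with $w(s,\eps)=s^{-2p}-(np^2\eps^2+p\eps)s^{-2}$ and $\tilde\rho(s,\eps)=s^p(1+O(\eps))$ chosen so that the boundary weight $\sqrt{1+p^2\eps^2 s^{2p-2}}/s^p$ from~\eqref{qeps} is absorbed, and satisfying $q_\eps(u,u)\ge r_\eps^-(\V^{-1}u,\V^{-1}u)$.

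For each $s\in(0,a)$, decompose $u(s,\cdot)=f(s)\psi_s+u^\perp(s,\cdot)$ in $L^2(\B_\eps)$, where $\psi_s:=\psi_{\eps,\tilde\rho(s,\eps)}$ is the positive normalised ground state of $B_{\eps,\tilde\rho(s,\eps)}$ provided by Lemma~\ref{lem-1}(b), and $u^\perp(s,\cdot)\perp\psi_s$. The spectral theorem in the fibre yields
\[
b_{\eps,\tilde\rho(s,\eps)}(u(s,\cdot),u(s,\cdot))\ge E_1(B_{\eps,\tilde\rho(s,\eps)})f(s)^2+E_2(B_{\eps,\tilde\rho(s,\eps)})\,\|u^\perp(s,\cdot)\|^2_{L^2(\B_\eps)}.
\]
By Lemma~\ref{lem-1}(c), $E_1(B_{\eps,\tilde\rho(s,\eps)})=-n/(\eps s^p)+O(1)$, and, combined with the weight $w(s,\eps)$, this reproduces, modulo an $O(\eps)$ multiplicative and $O(1)$ additive error, the potential of $L_{(1-b\eps)^2\eps,a}$ acting on $f$; the squared factor $(1-b\eps)^2$ arises because we lose one extra $(1-b\eps)$ when absorbing Cauchy--Schwarz cross terms below. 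Lemma~\ref{lem-1}(a,d) yields $E_2(B_{\eps,\tilde\rho(s,\eps)})\ge \tfrac12\eps^{-2}E_2^N$ for small $\eps$, so the $u^\perp$-contribution is bounded below by $C\eps^{-2}\|u^\perp\|^2_{L^2(\Pi_\eps)}$. In the $s$-derivative part one expands $\partial_s u=f'\psi_s+f\partial_s\psi_s+\partial_s u^\perp$; Lemma~\ref{lem-1}(e) gives $\|\partial_s\psi_s\|_{L^2(\B_\eps)}^2=O(\eps^2)$ uniformly, and the mixed $f$--$u^\perp$ cross terms are controlled by Cauchy--Schwarz with small parameter, the residues being absorbed into the large positive $u^\perp$-term.

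Assembling the estimates yields, for small $\eps$ and suitable constants $b,B_0>0$,
\[
r_\eps^-(u,u)\ \ge\ (1-b\eps)\bigl\langle f,L_{(1-b\eps)^2\eps,a}f\bigr\rangle_{L^2(0,a)}+\bigl(C\eps^{-2}-B_0\bigr)\|u^\perp\|^2_{L^2(\Pi_\eps)}-B_0\|u\|^2_{L^2(\Pi_\eps)}.
\]
Choose $B>B_0$ so that $C\eps^{-2}-B_0\ge B$ for $\eps$ small. Let $P$ denote the orthogonal projection in $L^2(\Pi_\eps)$ onto the subspace $\{f(s)\psi_s(t):f\in L^2(0,a)\}$, unitarily identified with $L^2(0,a)$ via $f\psi_s\leftrightarrow f$. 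If a $j$-dimensional subspace $V$ of the form domain satisfies $r_\eps^-(u,u)+B\|u\|^2<0$ for every nonzero $u\in V$, then $P|_V$ is necessarily injective (otherwise some nonzero $u\in V$ would have $u=u^\perp$, contradicting the displayed bound), so $\dim PV=j$ in $L^2(0,a)$. Combining the displayed lower bound with the min--max characterization of eigenvalues on both sides gives $E_j(T_{\eps,a})+B\ge (1-b\eps)\,E_j(L_{(1-b\eps)^2\eps,a})$, which is~\eqref{eq-low3}. The main obstacle is the bookkeeping of the cross terms in $\partial_s u$ under the $s$-dependent orthogonal decomposition: one must arrange that the factor $(1-b\eps)$ survives in front of $\|f'\|^2$ while the residual $u^\perp$-contributions remain strictly dominated by the $C\eps^{-2}$ coefficient, so that the min--max reduction to the genuinely one-dimensional operator $L_{(1-b\eps)^2\eps,a}$ is legitimate.
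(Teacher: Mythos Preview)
Your approach is essentially the paper's: lower bound via \eqref{m-est}, transform by $\V$, decompose orthogonally along the cross-sectional ground state $\psi_{\eps,\varrho(s,\eps)}$, exploit the spectral gap of $B_{\eps,\rho}$ together with Lemma~\ref{lem-1}(e) to control the $s$-derivative cross terms, and reduce to $L_{(1-b\eps)^2\eps,a}$ via min--max. The only difference is in the last step: the paper avoids your projection argument (and the attendant norm mismatch $\|f\|_{L^2(0,a)}\le\|u\|_{L^2(\Pi_\eps)}$, which you do not address) by recasting the form inequality as a comparison with the direct sum $L_{(1-b\eps)^2\eps,a}\oplus 0$ on $L^2(0,a)\times\cH$ via the \emph{unitary} map $u\mapsto(f,w)$, after which the min--max principle applies without any bookkeeping.
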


\begin{proof}
There exist $c>0$ and $\eps_0>0$ such that for all $\eps\in(0,\eps_0)$ and $s\in(0,a)$ there holds
\begin{align*}
\frac{1}{s^{2p}} - \frac{p\eps+np^2 \eps^2}{s^2} \,   &  \ge
\frac{1-(p\eps+np^2 \eps^2)\, a^{2p-2}}{s^{2p}}
\,  \geq \, \frac{1-c\eps }{s^{2p}}, \\
\sqrt{1+p^2\, \eps^2 s^{2p-2}}  & \le   \sqrt{1+p^2\, \eps^2 a^{2p-2}}\le \frac{1}{1-c\eps}\, .
\end{align*}
Combining the two inequalities with \eqref{m-est} we estimate the quadratic form $q_\eps$ from below as follows:
\begin{align}
q_\eps(u,u)  \, \geq\, q^-_\eps(u,u) & := (1-c\eps) \int_0^a \int_{\B_\eps} \big( |\partial_s u|^2+ s^{-2p}\, |\nabla_t u|^2 \big)\, s^{np} \, dt ds \nonumber\\
& \quad -\frac{1}{1-c\eps} \int_0^a \int_{\partial\B_\eps} u^2\,  s^{p(n-1)} d\tau  ds\, 
\quad \forall\, u\in\cD_0(q_\eps).
    \label{a-lowerb}
\end{align}
By construction we then have 
\[
E_j(T_{\eps,a})\ge  \inf_{\substack{S\subset \cD_0(q_\eps) \\ {\rm dim\, } S =j} } \, \sup_{\substack{u\in S\\ u\neq 0}}\,  \frac{q^-_\eps(u,u)}{\quad \|u\|_{L^2(\Pi_\eps,s^{np}ds\,dt)}^2}.
\]
Now consider the quadratic form $r^-_\eps$ in $L^2(\Pi_\eps)$ given by $r^-_\eps(u,u)=q^-_\eps(\V u,\V u)$ defined on
$\cD_0(r^-_\eps)=\V^{-1} \cD_0(q_\eps)\equiv \cD_0(q_\eps)$. Hence
\begin{equation}
  \label{low-eq00}
E_j(T_{\eps,a})\ge  \inf_{\substack{S\subset \cD_0(r^-_\eps) \\ {\rm dim\, } S =j} } \, \sup_{\substack{u\in S\\ u\neq 0}}\,  \frac{r^-_\eps(u,u)}{\quad \|u\|_{L^2(\Pi_\eps)}^2}.
\end{equation}
The direct substitution in combination with \eqref{per-partes} shows that
\begin{align}
   \label{eq-b}
r^-_{\eps,\delta}(u,u) & = (1-c\eps) \int_0^a \int_{\B_\eps} \left( |\partial_s u|^2+\frac{n^2p^2-2np}{4 s^2} \ u^2+ s^{-2p}\, |\nabla_t u|^2 \right)\,  dt ds \\
& \quad -\frac{1}{1-c\eps} \int_0^a \frac{1}{s^{p}} \int_{\partial\B_\eps} u^2\,  d\tau ds \nonumber \\
& = (1-c\eps) \int_0^a \int_{\B_\eps} \left( |\partial_s u|^2+\frac{n^2p^2-2np}{4 s^2} \ u^2 \right)\,  dt ds   \nonumber \\
& \quad +  (1-c\eps) \int_0^a \frac{1}{s^{2p}} \left\{  \int_{\B_\eps} |\nabla_t u|^2 \, dt - \frac{s^p}{(1-c\eps)^2}\, \int_{\partial\B_\eps} u^2\,  d\tau \right\}ds\, .   \nonumber
\end{align}
The expression in the curly brackets is the quadratic form $b_{\eps,\rho(s,\eps)}$ with 
\begin{equation} \label{rs}
\varrho(s,\eps) = \frac{s^p}{(1-c\eps)^2} \in (0,M), \quad M:=\frac{a^p}{(1-c \eps_0)^2}, \quad  \eps\in(0,\eps_0),
\end{equation}
see section \ref{sec-1d}. Let $\psi_{\eps, \varrho(s,\eps)}$ be the positive normalized eigenfunction of $B_{\eps,\varrho(s,\eps)}$ relative to the eigenvalue $E_1(B_{\eps,\varrho(s,\eps)})$. 
We decompose each $u\in \cD_0(r^-_\eps)$ as
\begin{equation}
      \label{eq-v} 
u = v + w,  \ \text{ where } \ 
v(s,t)=  \psi_{\eps, \varrho(s,\eps)}(t) \, f(s), \quad  f(s) := \int_{\B_\eps} u(s,t)\, \psi_{\eps, \varrho(s,\eps)}(t)\, dt. 
\end{equation}
Notice that by construction we have $f\in C^\infty_0(0,a)$. Furthermore,
\begin{gather} \label{orth}
\int_{\B_\eps} w(s,t)\, \psi_{\eps, \varrho(s,\eps)}(t)\, dt = 0 \quad \forall\,  s\in (0,a),\\
\|f\|^2_{L^2(0,a)} + \|w\|^2_{L^2(\Pi_\eps)}=\|u\|^2_{L^2(\Pi_\eps)},  \label{eq-normuw}
\end{gather}
and the spectral theorem implies that 
\begin{equation}
   \label{eq-utw}
\int_{\B_\eps} |\nabla_t u|^2 \, dt - \varrho(s,\eps)  \int_{\partial\B_\eps} u^2\,  d\tau \, \geq \, E_1(B_{\eps,\varrho(s,\eps)})\, f(s)^2 +E_2(B_{\eps,\varrho(s,\eps)})\, \int_{\B_\eps} w^2\, dt \, .
\end{equation}
Recall, see Lemma \ref{lem-1}(c), that one can find a constant $c_1>0$ such that
\[
E_1(B_{1,x})= -n\, x+\cO(x^2)> -\dfrac{n\, x}{1-c_1 x} \qquad \text{for small $x>0$.}
\]
By Lemma \ref{lem-1}(a) we have  $E_1(B_{\eps,\varrho(s,\eps)})=\eps^{-2}E_1(B_{1,\eps \varrho (s,\eps)})$, and $\eps \varrho (s,\eps)\in[0,M\eps]$.
By adjusting the value of $\eps_0$ we conclude that there exists $c_2>0$ such that for all $\eps\in(0,\eps_0)$ and $s\in(0,a)$ it holds
\[
\dfrac{E_1(B_{\eps,\varrho(s,\eps)})}{s^{2p}}=\dfrac{E_1(B_{1,\eps \varrho(s,\eps)})}{\eps^2 s^{2p}}\ge -\dfrac{n\, \eps \varrho(s,\eps)}{\eps^2 s^{2p}\big(1-c_2\eps \varrho(s,\eps)\big)}
\ge -\dfrac{n}{\eps(1-c_2 \eps) s^{p}}.
\]
In a similar way, using the fact that $E_2(B_{1,x})= E_2^N+ O(x)\ge A_0>0$ for small $x$, see Lemma \ref{lem-1}(d), we conclude 
that if $\eps_0>0$ is sufficiently small, then
for all $s\in (0,a)$ and $\eps\in(0,\eps_0)$ there holds
\[
\dfrac{E_2(B_{\eps,\varrho(s,\eps)})}{s^{2p}} = \dfrac{E_2(B_{1,\eps \varrho(s,\eps)})}{\eps^2 s^{2p}}\, \geq\,  \frac{A_0}{\eps^2 s^{2p}}.
\]
Inserting these eigenvalue estimates into \eqref{eq-utw} we arrive the inequality
\[
\int_{\B_\eps} |\nabla_t u|^2 \, dt - \varrho(s,\eps) \int_{\partial\B_\eps} u^2\,  d\tau \ \, \geq \, -\dfrac{n\, f(s)^2}{\eps(1-c_2 \eps) s^{p}} +\frac{1}{\eps^2 s^{2p}}\, \int_{\B_\eps} w^2\, dt \, ,
\]
valid for all $u\in \cD_0(r^-_\eps)$. The substitution of the last inequality into \eqref{eq-b} shows that
one can find $k>0$ such that for all $\eps\in(0,\eps_0)$ and $u\in \cD_0(r^-_\eps)$ there holds
\begin{align}
r^-_{\eps}(u,u)  & \geq  (1-c\eps) \int_0^a \int_{\B_\eps} \left( |\partial_s u|^2+\frac{n^2p^2-2np}{4 s^2} \, u^2 \right) dt ds \label{lowerb-b}\\
&\quad  +  (1-c\eps)\int_0^a \int_{\B_\eps}  \frac{w^2}{\eps^2 s^{2p}}\, dt ds-\int_0^a \frac{n}{(1-k\eps) \eps s^p}\,f^2 \, ds.  \nonumber 
\end{align}
In the sequel, for the sake of brevity we will adopt the notation $\psi := \psi_{\eps, \varrho(s,\eps)}$ and 
$$
\psi_s:=\partial_s \psi, \qquad v_s:=\partial_s v, \qquad w_s:=\partial_s w\, .
$$
Let us study the first term on the right hand side of \eqref{lowerb-b}. Using \eqref{orth} we get
\begin{align}
     \label{us}
\int_0^a\!\! \int_{\B_\eps} \left( |\partial_s u|^2+\frac{n^2p^2-2np}{4 s^2} \, u^2 \right) dt ds  & = \int_0^a\!\! \int_{\B_\eps} \Big( v_s^2+\frac{n^2p^2-2np}{4 s^2} \, v^2 \Big) dt ds \nonumber \\
& \quad +\int_0^a\!\! \int_{\B_\eps} \left( w_s^2+\frac{n^2p^2-2np}{4 s^2} \, w^2 \right) dt ds \\
& \quad  + 2 \int_0^a\!\! \int_{\B_\eps} v_s\, w_s\, dt\, ds. \nonumber
\end{align}
Since $\psi$ is normalized, one has
\[
\int_{\B_\eps} \psi \psi_sdt=0,
\]
and the first term on the right-hand side of \eqref{us} can be bounded from below as follows; 
\begin{align}
\int_0^a\!\!\! \int_{\B_\eps}\!\!\! \Big( v_s^2+\frac{n^2p^2-2np}{4 s^2} \, v^2 \Big) dt ds & = \int_0^a \left[\,  |f'|^2 + \Big(\frac{n^2p^2-2np}{4s^2} + \int_{\B_\eps} \!\! \big| \psi_s \big|^2 dt\Big) f^2 \right] ds \nonumber \\
& \geq  \int_0^a \left[ \, |f'|^2 + \frac{n^2p^2-2np}{4s^2}\, f^2 \right] ds \label{vs-lowerb}
\end{align}
In order to estimate the last two terms in \eqref{us} we note that  
\begin{equation} \label{crossed}
 \int_0^a \int_{\B_\eps} v_s\, w_s\, ds dt  =  \int_0^a \int_{\B_\eps} f'\, \psi\, w_s \, dtds+  \int_0^a f \int_{\B_\eps} \psi_s\, w_s\, dt\, ds.
\end{equation}
Then, in view of \eqref{orth} 
\[
\int_{\B_\eps} \psi\, w_s \, dt=-\int_{\B_\eps}  \, \psi_s  w\, dt. 
\] 
Hence, using the Cauchy-Schwarz inequality,
\[
\Big | \int_0^a \int_{\B_\eps} f'\, \psi\, w_s \, dtds \Big |  = \Big | \int_0^a \int_{\B_\eps} f'\, \psi_s\,w\,   dt\,ds\, \Big | \leq \int_0^\delta |f'|^2  \int_{\B_\eps}   \psi_s^2\, dtds
 + \|w\|_{L^2(\Pi_\eps)}^2. 
\]
To estimate the last term in \eqref{crossed} we use again the Young inequality;
\[
\Big |   \int_0^a f \int_{\B_\eps} \psi_s\, w_s\, dt\, ds      \, \Big |  \ \leq\  \frac{1}{\eps} \int_0^a f^2  \int_{\B_\eps}  \psi_s^2\, dt\,ds + \eps \|w_s\|_{L^2(\Pi_\eps)}^2. 
\]
By  \eqref{eq-kp} and \eqref{rs} there is $K>0$ such that for $\eps\in(0,\eps_0)$ and $ s\in(0,a)$
 one has
\[
\int_{\B_\eps}  \psi_s^2\, dt \, \leq \, K\eps^2\le\eps.
\]
Putting the above estimates together we obtain the upper bound
\[
\Big |   \int_0^a \int_{\B_\eps} v_s\, w_s\, ds dt   \, \Big| \, \leq \,  \eps \int_0^a |f'|^2\, ds + K\eps \int_0^a f^2\, ds + \|w\|_{L^2(\Pi_\eps)}^2  + \eps\|w_s\|_{L^2(\Pi_\eps)}^2. 
\]
In view of \eqref{us} and \eqref{vs-lowerb} it follows that there exists $C>0$ such that for all $\eps\in(0,\eps_0)$ and $u\in\cD_0(r^-_\eps)$
one has
\begin{align*} 
\int_0^a \int_{\B_\eps} \left( |\partial_s u|^2+\frac{n^2p^2-2np}{4 s^2} \, u^2 \right) dt ds & \, \geq \,   \int_0^a \left[ \, (1-\eps)\, |f'|^2 + \Big(\frac{n^2p^2-2np}{4s^2} - C \Big) f^2 \right] ds \\
& +  \int_0^a \!\!\!\int_{\B_\eps}\!\!\left[ \, (1-\eps)\, w_s^2 + \Big(\frac{n^2p^2-2np}{4s^2} - C \Big) w^2 \right] dt ds\, .
\end{align*}
By \eqref{lowerb-b} this in turn gives
\begin{align*}
r^-_\eps(u,u)  & \ge   (1-c\eps)\int_0^a \left[ \, (1-\eps)\, |f'|^2 + \left(\frac{n^2p^2-2np}{4s^2} - C \right) f^2 \right] ds \\
& \quad +  (1-c\eps)\int_0^a \int_{\B_\eps}\left[ \, (1-\eps) w_s^2 + \left(\frac{n^2p^2-2np}{4s^2} - C \right) w^2 \right] dt ds\\
& \quad + (1-c\eps)\int_0^a \int_{\B_\eps}  \frac{A_0 w^2}{\eps^2 s^{2p}}\, \, dt ds-\int_0^a \frac{n\, f^2}{(1-k\eps)^2 \eps s^p}\,\, \, ds,
\end{align*}
and using the norm equality \eqref{eq-normuw} one may rewrite
\begin{equation}
   \label{eq-reps0}
\begin{aligned}
r^-_\eps(u,u) +(1-c\eps)C \|u\|^2_{\Pi_\eps}  & \ge   (1-c\eps)\int_0^a \left[ \, (1-\eps)\, |f'|^2 + \left(\frac{n^2p^2-2np}{4s^2} \right) f^2 \right] ds \\
 &\quad+  (1-c\eps)\int_0^a \int_{\B_\eps}\left[ \, (1-\eps) w_s^2 + \left(\frac{n^2p^2-2np}{4s^2} \right) w^2 \right] dt ds\\
 &\quad + (1-c\eps)\int_0^a \int_{\B_\eps}  \frac{A_0 w(s,t)^2}{\eps^2 s^{2p}}\, \, dt ds-\int_0^a \frac{n\, f^2(s)}{(1-k\eps)^2 \eps s^p}\,\, \, ds.
\end{aligned}
\end{equation}

\smallskip
\noindent Next we notice that due to the Hardy inequality \eqref{hardy} we have
\[
\int_0^a \dfrac{g(s)^2}{s^2}\, ds \le \frac{4}{(np-1)^2 }
\int_0^a \left[g'(s)^2 + \frac{n^2p^2-2np}{4s^2}\ g(s)^2 \right] ds \qquad \forall\,  g\in C^\infty_0(0,a).
\]
Therefore, there exists $c_0>0$ such that for all $\eps\in(0,\eps_0)$ and $u\in \cD_0(r^-_\eps)$ one has
\begin{align*}
\int_0^a\Big[ \, (1-\eps)\, |f'|^2 + \Big(\frac{n^2p^2-2np}{4s^2} \Big) f^2 \Big] ds&\ge (1-c_0 \eps) \int_0^a\Big[ |f'|^2 + \Big(\frac{n^2p^2-2np}{4s^2} \Big) f^2 \Big] ds,\\
\int_0^a\!\!\! \int_{\B_\eps} \!\! \Big[ \, (1-\eps) w_s^2 + \Big(\frac{n^2p^2-2np}{4s^2} \Big) w^2 \Big] dt ds & \ge
(1-c_0 \eps) \int_0^a\!\!\!  \int_{\B_\eps}\!\! \Big[ w_s^2 + \Big(\frac{n^2p^2-2np}{4s^2} \Big) w^2 \Big] dt ds.
\end{align*}
We thus conclude that there exist $b>0$ and $B>0$
such that for all $\eps\in(0,\eps_0)$ and all $u\in\cD_0(r^-_\eps)$ there holds
\begin{align*}
r^-_\eps(u,u) + B\|u\|^2_{L^2(\Pi_\eps)}  &\ge (1-b\eps)\int_0^a \left[ \, \, |f'|^2 + \left(\frac{n^2p^2-2np}{4s^2} - \frac{n}{(1-b\eps)^2 \eps s^p} \right) f^2 \right] ds\\
&\quad + (1-b\eps)\int_0^a \int_{\B_\eps}\left[ \, w_s^2 + \left(\frac{n^2p^2-2np}{4s^2} + \frac{A_0}{\eps^2 s^{2p}} \right) w^2 \right] dt ds.
\end{align*}
Assuming $\eps_0$ sufficiently small we have
\[
\frac{n^2p^2-2np}{4s^2} + \frac{1}{\eps^2 s^{2p}} \ge 0 \qquad \forall\,  s\in (0,a). 
\]
Therefore,
\begin{equation}
   \label{reps-b}
r^-_\eps(u,u)+ B\|u\|^2_{L^2(\Pi_\eps)}\ge (1-b\eps) \int_0^a \left[ \, \, |f'|^2 + \left(\frac{n^2p^2-2np}{4s^2} - \frac{n}{(1-b\eps)^2 \eps s^p} \right) f^2 \right] ds.
\end{equation}
Note that by construction of $f$ and $w$ and the norm equality \eqref{eq-normuw} the map $u\mapsto(f,w)$ extends to a unitary map $\Psi:L^2(\Pi_\eps)\to L^2(0,a)\times \cH$,
where $\cH$ is a closed subspace of $L^2(\Pi_\eps)$. Let $R^-_\eps$ be the self-adjoint operator in $L^2(\Pi_\eps)$ generated by the closure of $r^-_\eps$,
then the inequality \eqref{low-eq00} means that
\begin{equation}
  \label{treps}
E_j(T_{\eps,a})\ge E_j(R^-_\eps)  \qquad \forall\, j\in\N. 
\end{equation}
On the other hand, let $h_\eps$ be the quadratic form in $L^2(0,a)\times \cH$ defined as the closure of
the form
\[
C_0^\infty(0,a)\times \cH\ni (f,w) \mapsto \int_0^a \left[ \, \, |f'|^2 + \left(\frac{n^2p^2-2np}{4s^2} - \frac{n}{(1-b\eps)^2 \eps s^p} \right) f^2 \right]ds,
\]
then the corresponding self-adjoint operator in $L^2(0,a)\times \cH$ is $H_\eps=L_{(1-b\eps)^2 \eps, a}\oplus 0$.
The inequality \eqref{reps-b} reads as
\[
r^-_\eps(u,u)+ B\|u\|^2_{L^2(\Pi_\eps)}\ge (1-b\eps)\,  h_\eps(\Psi u, \Psi u), \quad u\in \cD_0(r^-_\eps)
\]
which by the min-max principle implies that $E_j(R^-_\eps)+B\ge (1-b\eps)E_j(H_\eps)$.
Assume now that $j\in\{1,\dots,\cN(T_{\eps,a},-B)\}$. Then $E_j(T_{\eps,a})<-B$ and $E_j(R^-_\eps)+B<0$, which shows that
for the same $j$ one has $E_j(H_\eps)<0$, and then $E_j(H_\eps)=E_j(L_{(1-b\eps)^2 \eps,a})$.
\end{proof}

\noindent Now we can state the main result of the subsection.

\begin{prop}\label{prop10}
Let $a>0$, then there exist $K>0$, $k>0$ and $\eps_0>0$ such that
\[
E_j(T_{\eps,a})\ge (1+k\eps)\, \eps^{\frac{2}{p-2}} \, E_j(A_1) - K
\qquad \forall\, \eps\in(0,\eps_0), \ 1\leq j \leq \cN(T_{\eps,a},-K).
\]
\end{prop}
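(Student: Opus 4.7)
The plan is to derive the stated lower bound as a direct consequence of Lemma~\ref{lem9}, combined with the monotonicity $E_j(L_{\lambda,a})\ge E_j(A_\lambda)$ from \eqref{eq-1dd} and the scaling identity \eqref{eq-hom0}. First I would apply Lemma~\ref{lem9} to get, for $j\le\cN(T_{\eps,a},-B)$, the inequality
\[
E_j(T_{\eps,a})\ \ge\ (1-b\eps)\, E_j\big(L_{(1-b\eps)^2\eps,\,a}\big) - B ;
\]
then use $E_j(L_{\lambda,a})\ge E_j(A_\lambda)$ to replace $L$ by $A$, and finally apply the homogeneity $E_j(A_\kappa)=\kappa^{-2/(2-p)}E_j(A_1)$ with $\kappa=(1-b\eps)^2\eps$ to obtain
\[
E_j(T_{\eps,a})\ \ge\ (1-b\eps)^{-\frac{p+2}{2-p}}\,\eps^{-\frac{2}{2-p}}\, E_j(A_1)\,-\, B,
\]
where the exponent has been simplified via $1-\tfrac{4}{2-p}=-\tfrac{p+2}{2-p}$.

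The remaining task is to absorb the prefactor $(1-b\eps)^{-(p+2)/(2-p)}$ into a factor of the form $1+k\eps$. A first-order Taylor expansion gives
\[
(1-b\eps)^{-\frac{p+2}{2-p}}\ =\ 1+\frac{(p+2)b}{2-p}\,\eps+O(\eps^2),
\]
so for any fixed $k>\tfrac{(p+2)b}{2-p}$ one has $(1-b\eps)^{-(p+2)/(2-p)}\le 1+k\eps$ provided $\eps_0$ is chosen small enough. To turn this scalar inequality into the advertised eigenvalue bound, I would verify that $E_j(A_1)<0$ throughout the admissible range. This is automatic if $j\le\cN(T_{\eps,a},-K)$ with $K\ge B$: the chained inequality above then forces $E_j\big(L_{(1-b\eps)^2\eps,a}\big)<0$, and \eqref{eq-1dd} together with \eqref{eq-hom0} propagates negativity to $E_j(A_1)$. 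Multiplying the scalar estimate by the negative quantity $\eps^{-2/(2-p)}E_j(A_1)$ then reverses the direction of the inequality and delivers the statement with $K:=B$ and any fixed $k_2>\tfrac{(p+2)b}{2-p}$.

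I do not expect any serious obstacle in executing this plan: the genuinely hard analytic work, namely the reduction of the full peak operator $T_{\eps,a}$ to the one-dimensional operator $L_{\eps,a}$, has already been carried out in Lemma~\ref{lem9}. The only delicate point is the sign bookkeeping at the end: one must choose $k$ strictly larger than the linear Taylor coefficient in order to dominate the positive quadratic remainder uniformly in $\eps\in(0,\eps_0)$, and one must restrict to indices $j$ for which $E_j(A_1)<0$ so that multiplying by a negative number legitimately flips the inequality. Both restrictions are exactly those built into the statement through the range $1\le j\le\cN(T_{\eps,a},-K)$.
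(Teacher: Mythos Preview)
Your proposal is correct and follows essentially the same route as the paper: apply Lemma~\ref{lem9}, pass from $L_{\lambda,a}$ to $A_\lambda$ via \eqref{eq-1dd}, rescale with \eqref{eq-hom0}, and absorb the resulting $(1-b\eps)$-prefactors into a factor $1+k\eps$ using $E_j(A_1)<0$. You are in fact slightly tidier than the paper, which bounds $E_j(L_{(1-b\eps)^2\eps,a})$ by $(1+k\eps)\eps^{2/(p-2)}E_j(A_1)$ \emph{before} multiplying by the extra $(1-b\eps)$ from Lemma~\ref{lem9}, leaving a residual factor to be silently absorbed; you combine both $(1-b\eps)$-contributions into the single exponent $-(p+2)/(2-p)$ and handle them in one step.
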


\begin{proof}
Due to \eqref{eq-1dd} and \eqref{eq-hom0} one has,
for any $j\in\N$ and a suitably chosen $k>0$,
\[
E_j(L_{(1-b\eps)^2\eps,a})\ge E_j(A_{(1-b\eps)^2\eps})=(1-b\eps)^{\frac{4}{p-2}} \, \eps^{\frac{2}{p-2}} E_j(A_1)
\ge (1+k \eps)\, \eps^{\frac{2}{p-2}} E_j(A_1)\, .
\]
The substitution of the above lower bound into the result of Lemma~\ref{lem9} completes the proof.
\end{proof}

\section{\bf Proof of Proposition \ref{prop-main} }\label{sec-qeb}

\noindent
In order to simplify the notation, for $b>0$ and $\eps>0$, we denote 
$$
\ell:=b \, \eps^{\frac{1}{1-p}}. 
$$
Then, the operators $Q_{\eps,b}$ and $\Tilde Q_{\eps,b}$ defined in \eqref{Q-eps} are generated by the quadratic forms
\begin{align*}
q_{\eps,b}(u,u)&=\int_{V_{\eps,\ell}} |\nabla u|^2dx- \int_{\partial_0 V_{\eps,\ell}} u^2\, ds, \quad \cD(q_{\eps,b})=H^1(V_{\eps,\ell}),  \ \ \text{and} \\
\Tilde q_{\eps,b}(u,u)&=q_{\eps,b}(u,u), \quad \cD(\Tilde q_{\eps,b})=\Tilde H^1_0(V_{\eps,\ell})
\end{align*}
respectively, with $V_{\eps,\ell}$  defined in \eqref{def-veps}.  Note that the domain inclusions imply the obvious inequalities
\begin{align}
     \label{ineq1}
E_j(Q_{\eps,b}) & \le E_j(\Tilde Q_{\eps,b}) \quad \text{ for all } j\in\N,\\
     \label{ineq2}
E_j(\Tilde Q_{\eps,b}) & \le E_j(T_{\eps,a}) \quad \text{ for all $a\le \ell$ and $j\in\N$,}
\end{align}
where the operator $T_{\eps,a}$ is defined in \eqref{eq-tea}. Let us give a lower bound for $Q_{\eps,b}$:

\begin{lemma}\label{lem11}
Let $a>0$. Then there exist $B>0$, $k>0$ and $\eps_0>0$ such that
\[
E_j(Q_{\eps,b})\ge E_j(T_{\eps,a})-k \qquad \forall\,  \eps\in(0,\eps_0), \ \ 1\leq  j\leq \cN(Q_{\eps,b},-B/\eps).
\]
\end{lemma}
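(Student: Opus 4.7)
The plan is to isolate the peak by an IMS-type partition of unity and control the remaining Lipschitz tail by a Born--Oppenheimer argument on each cross-section.

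I first choose smooth cutoffs $\chi_1,\chi_2\in C^\infty(\R)$, depending only on $s=x_1$, with $\chi_1^2+\chi_2^2\equiv 1$, $\chi_1\equiv 1$ on $[0,a/2]$ and $\chi_1\equiv 0$ on $[a,\infty)$, and set $K_0:=\|\chi_1'\|_\infty^2+\|\chi_2'\|_\infty^2$. The standard IMS localization formula (using $\chi_1^2+\chi_2^2=1$ to split both the gradient and the boundary integral) gives, for every $u\in H^1(V_{\eps,\ell})$,
\[
q_{\eps,b}(u,u)+K_0\|u\|^2 \ \ge\ q_{\eps,b}(\chi_1 u,\chi_1 u)+q_{\eps,b}(\chi_2 u,\chi_2 u).
\]
Since $\chi_1 u$ vanishes for $s\ge a$, its extension by zero belongs to $\Tilde H^1_0(V_{\eps,a})$, so the first right-hand side term equals $t_{\eps,a}(\chi_1 u,\chi_1 u)\ge \langle T_{\eps,a}\chi_1 u,\chi_1 u\rangle$. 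The second term is the Robin form on the Lipschitz tail $W_\eps:=V_{\eps,\ell}\cap\{s>a/2\}$ which I denote $q^{\mathrm{out}}_\eps$ and whose self-adjoint realization on $L^2(W_\eps)$ is $Q^{\mathrm{out}}_\eps$.

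The crux of the proof is a uniform lower bound $Q^{\mathrm{out}}_\eps\ge -(B/\eps)\,I$ with $B$ independent of $\eps$. After splitting $|\nabla v|^2=|\partial_s v|^2+|\nabla_{x'} v|^2$, dropping the nonnegative $|\partial_s v|^2$ term, and noting that the surface element on $\partial_0 W_\eps$ factors as $r(s)\,d\tau\,ds$ with $r(s):=\sqrt{1+p^2\eps^2 s^{2p-2}}$, Fubini and the variational characterization yield
\[
q^{\mathrm{out}}_\eps(v,v)\ \ge\ \int_{a/2}^\ell b_{\eps s^p,\,r(s)}(v(s,\cdot),v(s,\cdot))\,ds\ \ge\ \int_{a/2}^\ell E_1(B_{\eps s^p,r(s)})\,\|v(s,\cdot)\|^2\,ds,
\]
where $b_{\eps,r}$ and $B_{\eps,r}$ are the Robin form and operator introduced in \eqref{d-form}. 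By Lemma~\ref{lem-1}(a),(c),
\[
E_1(B_{\eps s^p,r(s)})\ =\ -\frac{n\,r(s)}{\eps s^p}\,+\,r(s)^2\,\varphi\bigl(\eps s^p r(s)\bigr),\qquad \varphi\in L^\infty(0,\infty).
\]
The critical quantitative input is the distinguished scaling $\ell=b\,\eps^{1/(1-p)}$, which forces $\eps^2 s^{2p-2}\le\eps^2\ell^{2p-2}=b^{2p-2}$ on $(a/2,\ell)$, so $r(s)$ is bounded by a constant depending only on $b,p$, while $\eps s^p\ge\eps(a/2)^p$. Together these give $E_1(B_{\eps s^p,r(s)})\ge -B/\eps$ uniformly on the tail.

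Assembling the two pieces and using $\|u\|^2=\|\chi_1 u\|^2+\|\chi_2 u\|^2$, the isometric embedding $u\mapsto(\chi_1 u,\chi_2 u)$ of $L^2(V_{\eps,\ell})$ into $L^2(V_{\eps,a})\oplus L^2(W_\eps)$ together with the standard min-max argument delivers
\[
E_j(Q_{\eps,b})+K_0\ \ge\ E_j\bigl(T_{\eps,a}\oplus Q^{\mathrm{out}}_\eps\bigr)\ \ge\ E_j\bigl(T_{\eps,a}\oplus (-B/\eps)I\bigr).
\]
Since the spectrum of the multiplier $(-B/\eps)I$ is the single point $-B/\eps$, every eigenvalue of the direct sum strictly below $-B/\eps$ is an eigenvalue of $T_{\eps,a}$. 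Hence, whenever $E_j(Q_{\eps,b})+K_0<-B/\eps$, one has $E_j(T_{\eps,a}\oplus(-B/\eps)I)=E_j(T_{\eps,a})$ and therefore $E_j(Q_{\eps,b})\ge E_j(T_{\eps,a})-K_0$. For $\eps$ small enough, rechoosing $B$ (for instance doubling it, valid once $K_0\eps<B$) absorbs the additive $K_0$ into the threshold, which yields the statement of the lemma with $k=K_0$ and for $j\le \cN(Q_{\eps,b},-B/\eps)$.

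The principal obstacle is the uniform lower bound in the second paragraph: the tail $W_\eps$ has length $\ell\to\infty$ as $\eps\to 0$ and its cross-section radius grows with $s$, so a direct appeal to Corollary~\ref{cor-gris} would give a Lipschitz constant degenerating with $\eps$. It is precisely the distinguished scaling $\eps^2\ell^{2p-2}=b^{2p-2}=O(1)$ built into the definition of $Q_{\eps,b}$ that keeps the effective Robin coefficient on each cross-section bounded and permits the $O(1/\eps)$ estimate.
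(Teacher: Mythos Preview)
Your proof is correct and follows essentially the same route as the paper's: an IMS cutoff at $x_1\sim a$ splits off the peak part $T_{\eps,a}$, and the tail is controlled slice by slice via the cross-sectional Robin eigenvalue $E_1(B_{\eps s^p,r(s)})$ from Lemma~\ref{lem-1}(a),(c), with the key scaling identity $\eps^2\ell^{2p-2}=b^{2p-2}$ keeping $r(s)$ uniformly bounded. Your final bookkeeping of the threshold (absorbing $K_0$ by doubling $B$) is slightly more explicit than in the paper, but the argument is the same.
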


\begin{proof}
One may assume from the very beginning that $a\le \ell$. Let $\phi_1$ and $\phi_2$ be two smooth functions on $\R$ with the following properties:
\[
\phi_1^2+\phi_2^2=1, \quad \phi_1(s)=0 \text{ for $s>a$}, \quad \phi_2(s)=0 \text{ for $s<a/2$.}
\]
We set
\[
k:=\|\phi_1'\|_\infty^2+\|\phi_2'\|_\infty^2,
\quad
\chi_j(x):=\phi(x_1), \quad j\in\{1,2\}.
\]
By a direct computation, for any $u\in\cD(q_{\eps,b})$ there holds
\begin{align*}
q_{\eps,b}(u,u) & =q_{\eps,b}(\chi_1 u,\chi_1 u)+q_{\eps,b}(\chi_2 u,\chi_2 u)
-\int_{V_{\eps,\ell}} \big(|\nabla \chi_1|^2+|\nabla \chi_2|^2\big)\, u^2\, dx\\
& \ge q_{\eps,b}(\chi_1 u,\chi_1 u)+q_{\eps,b}(\chi_2 u,\chi_2 u) - k \|u\|^2_{L^2(V_{\eps,\ell})}.
\end{align*}
Denote
\begin{align*}
r_\eps(u,u)&=\int_{W_\eps} |\nabla u|^2dx- \int_{\partial_0 W_\eps} u^2\, ds, \quad \cD(r_\eps)=\Tilde H^1_0(W_\eps),\\
W_\eps&=\big\{(x_1,x'):\, x_1\in(a/2,\ell), \, |x'| < \eps\, x_1^p\big\}\subset\R^N,\\
\partial_0 W_\eps&=\big\{(x_1,x' ):\, x_1\in(a/2,\ell), \, |x'|= \eps\, x_1^p \big\}\subset \partial V_{\eps,\ell},\\
\Tilde H^1_0(W_\eps)&=\big\{u\in H^1(W_\eps):\, u(\ell,\cdot)= u(a/2,\cdot)=0\big\},
\end{align*}
and let $R_\eps$ be the self-adjoint operator acting in $L^2(W_\eps)$ and generated by $r_\eps$.
Then one has
\begin{align*}
\|\chi_1 u\|_{L^2(V_{\eps,\ell})}&=\|\chi_1 u\|_{L^2(V_{\eps,a})}, & \|\chi_2 u\|_{L^2(V_{\eps,\ell})}&=\|\chi_2 u\|_{L^2(W_\eps)},\\
q_{\eps,b}(\chi_1 u,\chi_1 u)& =t_{\eps,a}(\chi_1 u,\chi_1 u),&  q_{\eps,b}(\chi_2 u,\chi_2 u)&= r_\eps(\chi_2 u,\chi_2 u),
\end{align*}
and for any $j\in\N$ the min-max principle gives
\begin{equation}
\begin{aligned}
E_j(Q_{\eps,b})+k
&\ge \inf_{\substack{S\subset \cD(q_{\eps,b})\\ \dim S=j}} \sup_{u\in S, u\ne 0}
\dfrac{q_{\eps,b}(\chi_1 u,\chi_1 u) + q_{\eps,b}(\chi_2 u,\chi_2 u)}{\|\chi_1 u\|^2_{L^2(V_{\eps,\ell})}+\|\chi_2 u\|^2_{L^2(V_{\eps,\ell})}}\\
&= \inf_{\substack{S\subset \cD(q_{\eps,b})\\ \dim S=j}} \sup_{u\in S, u\ne 0}
\dfrac{t_{\eps,a}(\chi_1 u,\chi_1 u) + r_\eps(\chi_2 u,\chi_2 u)}{\|\chi_1 u\|^2_{L^2(V_{\eps,a})}+\|\chi_2 u\|^2_{L^2(W_\eps)}}\\
&\ge \inf_{\substack{S\subset \cD(t_{\eps,a})\oplus \cD(r_\eps)\\\dim S=j}} \sup_{(u_1,u_2) \in S}
\dfrac{t_{\eps,a}(u_1,u_1) + r_\eps(u_2,u_2)}{\|u_1\|^2_{L^2(V_{\eps,a})}+\|u_2\|^2_{L^2(W_\eps)}}\\
& =E_j\big(T_{\eps,a}  \oplus R_\eps\big).
\end{aligned}
   \label{eq-esta}
\end{equation}

\noindent Let us now obtain a lower bound for $R_\eps$. Using Fubini's theorem, for $u\in H^1(W_\eps)$ one has
\begin{align}
 \int_{W_\eps} |\nabla u|^2 dx  & -  \int_{\partial_0 W_\eps} u^2\, d\sigma=
\int_{a/2}^\ell \bigg[\int_{\B_{\eps x_1^p}} |\nabla u|^2 dx' - \sqrt{1+\eps^2p^2 x_1^{2p-2}} \int_{\partial\B_{\eps x_1^p}} u^2 d\tau  \bigg]dx_1\nonumber\\
&\ge
\int_{a/2}^\ell \bigg[\int_{\B_{\eps x_1^p}} |\nabla_{x'} u|^2 dx' - \sqrt{1+\eps^2p^2 x_1^{2p-2}} \int_{\partial\B_{\eps x_1^p}} u^2 d\tau\bigg]dx_1\nonumber \\
& \ge
\int_{a/2}^\ell \bigg[ E_1 \Big(B_{\eps x_1^p,  \sqrt{1+\eps^2p^2 x_1^{2p-2}}}\Big) \int_{\B_{\eps x_1^p}} u^2 dx'\bigg]\,dx_1\ge \Lambda \int_{W_{\eps}} u^2 dx \label{Lambda}
\end{align}
with
\[
\Lambda:=\inf_{x_1\in (a/2,\ell)} E_1 \Big(B_{\eps x_1^p,  \sqrt{1+\eps^2p^2 x_1^{2p-2}}}\Big).
\]
Due to the parts (a) and (c) of Lemma~\ref{lem-1} one has 
\begin{align*}
& E_1 \Big(B_{\eps x_1^p,  \sqrt{1+\eps^2p^2 x_1^{2p-2}}}\Big)=\dfrac{1}{\eps^2 x_1^{2p}}\ E_1 \Big(B_{1,  \eps x_1^p \sqrt{1+\eps^2p^2 x_1^{2p-2}}}\Big)\\
& \quad =-\dfrac{n\, \sqrt{1+\eps^2p^2 x_1^{2p-2}}}{\eps x_1^{p}}+  (1+\eps^2p^2 x_1^{2p-2}) \, \varphi\big( \eps x_1^p \sqrt{1+\eps^2p^2 x_1^{2p-2}}\,\big)
\end{align*}
with $\varphi\in L^\infty(0,\infty)$. Hence,
\[
E_1 \Big(B_{\eps x_1^p,  \sqrt{1+\eps^2p^2 x_1^{2p-2}}}\Big)=- n\, \sqrt{\dfrac{1}{x_1^{2p}}+\dfrac{\eps^2 p^2}{x_1^2}}\,\cdot \dfrac{1}{\varepsilon}
+
 (1+\eps^2p^2 x_1^{2p-2}) \,\varphi\Big( \eps x_1^p \sqrt{1+\eps^2p^2 x_1^{2p-2}}\,\Big).
\]
Note that under the assumptions $0<\eps<\eps_0$ and $a/2<x_1<\ell$ one has
\begin{align*}
B_1 &:= \sqrt{\dfrac{1}{a^{2p}}+\dfrac{\eps_0^2 p^2}{a^2}}\ge \sqrt{\dfrac{1}{x_1^{2p}}+\dfrac{\eps^2 p^2}{x_1^2}},\\
B_2 & :=  1+b^2 p^2 \equiv 1+\eps^2p^2 \ell^{2p-2}\,\geq 1+\eps^2p^2 x_1^{2p-2},
\end{align*}
implying
\[
E_1 \Big(B_{\eps x_1^p,  \sqrt{1+\eps^2p^2 x_1^{2p-2}}}\Big)\ge -\dfrac{n\, B_1}{\eps} - B_2 \|\varphi\|_\infty.
\]
Hence $\Lambda\ge - nB_1 /\eps - B_2 \|\varphi\|_\infty \ge -B/\eps$
for $B:= n\, B_1+B_2 \|\varphi\|_\infty \eps_0$.
Therefore, by \eqref{Lambda}, for $j\le \cN(T_{\eps,a},-B/\eps)$ one has $E_j\big(T_{\eps,a}  \oplus R_\eps\big)=E_j(T_{\eps,a})$, and the substitution into \eqref{eq-esta} gives the result.
\end{proof}
\begin{proof}[\bf Proof of Proposition \ref{prop-main}]
The upper bound \eqref{main-upperb} follows by combining inequality \eqref{ineq2} with Proposition~\ref{prop7}. On the other hand,  Lemma~\ref{lem11} together with Proposition~\ref{prop10} imply the lower bound \eqref{main-lowerb}. 
\end{proof}

\section*{\bf Acknowledgements}  H.~K.  has been partially supported by Gruppo Nazionale per Analisi Matematica, la Probabilit\`a e le loro Applicazioni (GNAMPA) of the Istituto Nazionale di Alta Matematica (INdAM). The support of  MIUR-PRIN2010-11 grant for the project ``Calcolo delle variazioni'' (H.~K.), is also gratefully acknowledged.

\bigskip\bigskip

\end{document}